\documentclass[letterpaper,12pt]{article}

\usepackage{microtype,booktabs}
\usepackage{amsmath,amssymb,amsfonts,amsthm,bm}
\usepackage{authblk}
\usepackage{mathrsfs}
\usepackage{indentfirst}
\usepackage[pdftex]{color,graphicx}
\usepackage[pdftex,bookmarks,unicode,colorlinks,linkcolor=blue]{hyperref}
\usepackage{tikz}
\usepackage{subfigure}
\usetikzlibrary{arrows, decorations.pathmorphing, backgrounds, positioning, fit, petri, automata,cd}
\usepackage{rotating}
\numberwithin{equation}{section}

\usepackage{todonotes}

\textheight 220mm \textwidth 160mm \topmargin 0.5cm \oddsidemargin
0pt \evensidemargin 0pt
\parskip=2mm
\voffset -2cm

\title{\Large A parameterization method for quasi-periodic systems with noise: computation of random invariant tori}

\date{}
\author[$\dag$]{\normalsize Pingyuan Wei}
\author[$\ddag$]{Lei Zhang\thanks{Corresponding author.}}
\affil[$\dag$]{\small School of Mathematics, Southeast University, Nanjing 211189, China $\&$ Beijing International Center for Mathematical Research, Peking University, Beijing 100871, China. (Email:pwei@seu.edu.cn)}
\affil[$\ddag$]{School of Mathematical Sciences, Dalian University of Technology, Dalian 116023, China.
(Email:lzhang@dlut.edu.cn)}

\newtheorem{thm}{Theorem}[section]
\newtheorem{theorem}{Theorem}[section]

\newtheorem{corollary}[thm]{Corollary}
\newtheorem{lem}[thm]{Lemma}
\newtheorem{lemma}[thm]{Lemma}
\newtheorem{pro}[thm]{Proposition}

\theoremstyle{remark}

\newtheorem{remark}[thm]{Remark}
\theoremstyle{definition}
\newtheorem{definition}[thm]{Definition}

\newcommand{\ZZ}{\mathbb{Z}}

\newcommand{\NN}{\mathbb{N}}
\newcommand{\RR}{\mathbb{R}}
\newcommand{\CC}{\mathbb{C}}
\newcommand{\TT}{\mathbb{T}}
\newcommand{\PP}{\mathbb{P}}

\begin{document}
\maketitle

\begin{abstract}
This work is devoted to studying normally hyperbolic invariant manifolds (NHIMs) for a class of quasi-periodically forced systems subject to additional stochastic noise. These systems can be understood as skew-product systems. The existence of NHIMs is established by developing a parameterization method in random settings and applying the Implicit Function Theorem in appropriate Banach spaces. Based on this, we propose a numerical algorithm to compute the statistics of NHIMs and Lyapunov exponents.
\end{abstract}

\tableofcontents

\section{Introduction}

Transition State Theory (TST) has long been regarded as one of the most significant frameworks for understanding and estimating chemical reaction rates \cite{Miller1993, Waalkens2007}. Interesting quick historical overviews can be found in \cite{Hernandez1993, Truhlar1996}. In autonomous systems, the transition state is commonly modeled as a normally hyperbolic invariant manifold (NHIM). Reaction rates in such systems can be derived from the geometric properties of NHIM under specific conditions \cite{Uzer2002, Mackay2014}. For systems subject to time-varying external forces, the geometric structures of TST are known to exist in various cases \cite{Lehmann2000, CravenBH14, CravenBH15} even though they become time-dependent, and  can be determined by effective algorithms \cite{Junginger2016, Revuelta2017, ZhangL18}. Recently, there has been growing interest in studying transition states in chemical reactions under external stimuli \cite{Craven2015JCP, Junginger2017}. These stimuli, which include light, electric fields, thermal variations, and photo-induction, enable control over the rate and pathways of transformations from reactants to products. Such approaches have important implications for harnessing mechanical action in applications \cite{Leigh2003, Fletcher2005, Zazza2013, Valsson2016}. A key aspect of these systems is the natural emergence of stochastic driving forces (e.g., stochastic noise). This makes it more appropriate to consider
the state of the systems evolving through random or stochastic differential equations rather than deterministic ones \cite{Oksendal2013, Duan15}. This shift in perspective not only aligns with the inherent randomness in such systems but also opens exciting new avenues for understanding and predicting reaction dynamics in noisy environments.

The specific problem addressed in this paper is the study of the transition state in oscillating potential models that incorporate dissipation and are subject to stochastic influences. These models are represented as random skew-product dynamical systems, where the base map is a rigid rotation on a torus (quasi-periodic systems), and they naturally arise in the modeling of chemical reactions under external perturbations \cite{Craven2015JCP}. More precisely, let $x(t)$(resp. $v(t)$) denote the position (resp. velocity) of a chemical reaction system at time $t$, evolving in $\RR^d$, under the influence of the following forces:
\begin{itemize}
      \item[1.] (Quasi-Periodically) Potential Force: The force $- \nabla U(x-E(t))$ arises from a potential $U$ with a time-dependent center $E(t)$. In this paper, $E(t)$ is considered as a quasi-periodic function with frequencies $\theta=(\theta_1,\cdots,\theta_m)$ given by:   
      \begin{equation}\label{eq:1}
      E(t)=\hat{E}(\theta_1,\cdots,\theta_m), 
      \end{equation}
      where $\hat{E}$ is a a mapping from $\TT^m$ (the $m$-dimensional torus) to $\RR^d$. A natural example is
      \begin{equation}\label{eq:1-1}
          E(t)=\sum_{i=1}^d \sin(\alpha_it+\beta_i)
      \end{equation}
      with $\alpha_i\in\RR \backslash Q$ (i.e., irrational frequencies) and $\beta_i\in\RR$. Following \cite{CravenBH15,Craven2015JCP}, a common choice for the potential is
  \begin{equation}
    \label{eq:2}
    U(x)=-\frac{1}{2}x^2-\frac{1}{4}\delta x^4, 
  \end{equation}
  where $\delta$ is an anharmonic coefficient. In this paper, we assume that $\delta<0$, making $U(x)$ a double-well potential.
  \item[2.] Dissipative Force: The dissipative force $-\gamma v$, where $\gamma>0$ represents the strength of dissipation.  
  \item[3.] Stochastic Driving Force: The stochastic influence is represented as $\varepsilon\xi_t(\omega)$,  where $\xi_t(\omega)$ is an independent stochastic process on a probability space $( \Omega , \mathscr{F} , \PP)$ with values in $\RR^d$, and the parameter $\varepsilon$ controls the strength of the noise. This stochastic driving force can take various forms, such as continuous bounded process, Gaussian white noise and non-Gaussian L\'evy noise. For simplicity and practical relevance, we focus on the Gaussian white noise, modeled as ${\xi}_t=\dot{W}_t$, the formal time derivative of a standard Brownian motion (or Wiener process) \cite{Duan15}. This choice is common in applications. However, the methods developed here may also be applicable to other types of stochastic influences.
  \end{itemize} 
  The dynamics under consideration can be described by the following Langevin equation, which is interpreted as a stochastic differential equation (SDE) when ${\xi}$ is a white noise process:
  \begin{equation}   
  \label{eq:model}
  \begin{split}
  \left \{
  \begin{array}{ll}
    \dot{x}_t&=v_t,\\
    \dot{v}_t&=-\gamma v_t-  \nabla U (x_t-E(t))+\varepsilon\xi_t,
  \end{array}
   \right.
    \end{split}
  \end{equation}
  with initial value $x(0)=x_0$ and $v(0)=v_0$. For a detailed mathematical discussion of this equation, refer to Section 2.

In the absence of oscillations ($E(t)=0$) and noise ($\varepsilon=0$), the system is dissipative, it will converge to the phase points where the Hamiltonian $H(x,v)=\frac{1}{2}v^2+U$ attains local minima. The primary question in this case is to characterize the boundary of the region of initial conditions that eventually converge to one of the Hamiltonian's minima. In such scenarios, the transition state corresponds to a hyperbolic fixed point in phase space. When the system is driven by a time-dependent oscillating external potential  ($E(t)\neq0$), the dissipative analysis breaks down. A common approach to handle this case is to study an extended system. In this extended framework, the transition state becomes a torus (or, in certain cases, a circle) in phase space \cite{CravenBH15, CravenBH14, Canadell2015, ZhangL18}. When stochastic noise is introduced ($\varepsilon>0$), it may compensate the energy loss due to dissipation. This leads to  some fascinating phenomena, see \cite{Roberts1990, Wu2001} for details. In this work, we take both oscillations and noise in account and focus on studying the transition state. Our goal  is to extend the methods of \cite{ZhangL18} to incorporate the effects of noise. We demonstrate that the invariant manifolds described in \cite{ZhangL18} persist under the addition of stochastic noise. Furthermore, we would like to present effective methods to compute and analyze various statistical features of the resulting objects.

The fact that we are dealing with a stochastic system leads to some complicated notation
and bookkeeping \cite{Arnold98, Duan15}. The main difficulty as well as the crux of the extension lies in handling the noise term and demonstrating the required measurability properties. Furthermore, since a stochastic dynamical system is inherently nonautonomous, the concept of an invariant manifold must be extended. To understand the invariance of a random manifold, we interpret it as follows: each orbit that starts within the manifold stays within it sample-wisely, module
the change of sample due to noise (e.g., via time shifts, as defined in the next section). Thus, the transition state (which, in the deterministic case, is typically an NHIM) becomes an object that depends on the realization of the noise.

It should be pointed out that various approaches have been developed to study perturbation results for (normally hyperbolic) invariant manifolds. Among the most notable are Hadamard's graph transform method and the Liapunov-Perron method; see, for example, \cite{Arnold98, DuanLS03, DuanLS04, Liji2013} and the references therein. In contrast, the method we present here differs from these classical approaches. It indeed extends recent results by \cite{ZhangL18, Cheng2019}, and is inspired by studies on the parameterization method, originally introduced in \cite{CFdelaL2003a, CFdelaL2003b, HdlL06a, HdlL06b, Haro}. The core idea of the parameterization
method we will use is to describe the (random) invariant manifold as a (random) embedding map of a reference manifold into the phase space.  The invariance property
is thus described by a functional equation (which we called random invariance equation) for the embedding map. In other words, the parameterization method reduces the problem of establishing the existence of a (random) invariant manifold to solving specific functional equations. It is particularly well-suited for designing algorithms to compute invariant manifolds, especially in quasi-periodically forced systems. Furthermore, compared to other numerical techniques, such as those based on non-stationary normal form theory \cite{normalform1, normalform2}, the parameterization method offers a key advantage: computations are performed directly in the original physical coordinates so that the results retain a clear physical interpretation.

This paper is organized as follows. In the preliminary Section 2, we first review some basic concepts on SDEs, and then introduce the techniques of random transformations and autonomization. This leads to the fact that we are working with a random (quasi-periodic) skew-product system. Furthermore, we introduce the definitions of random invariant manifolds and random invariant equations, which form the foundation of our study. In Section 3, we address the existence and persistence of (normally hyperbolic) random invariant tori. By carefully choosing appropriate Banach spaces and defining transfer operators, we establish and prove the main result, Theorem \ref{Existence-Persistence}, based on the elementary Implicit Function Theorem. In Section 4, we study the expansions of random invariant tori in terms of the small parameter $\varepsilon$, and then develop numerical algorithms to estimate the random invariant tori as well as the corresponding Lyapunov exponents. Applications to the Langevin equation \eqref{eq:model} with both continuous bounded process and white noise are presented in Section 5. 

\section{Preliminaries}\label{Setups}

\subsection{A stochastic differential equation (SDE)}
Mathematically, the model \eqref{eq:model} is indeed an SDE of the form
\begin{equation}
\label{Cauchy-SDE}
d{z}= Az dt + B(z,t)dt+\varepsilon\sigma d{W}_t,
\end{equation}
where: (with $O$, $I$ the $d\times d$ zero and unit matrices respectively)
\begin{itemize}
    \item $z=(x,v)^T\in \RR^{2d} $ is the state-space point combining position $x$ and velocity $v$;
    \item $A=\begin{small}\begin{pmatrix}  O & I \\ I & -\gamma I \end{pmatrix} \end{small}$ is a $2d\times 2d$ hyperbolic matrix with $\gamma>0$, and  $B(z,t)=\big(O,-E(t)+(x-E(t)^3\big)^T$ is a nonlinear function with $E$ given in \eqref{eq:1-1};
    \item  
    $\sigma=\big(O,I\big)^T$ is a degenerate diffusion coefficient, $\varepsilon$ is a small parameter and ${W}_t$ is a $d$-dimensional standard Brownian motion. 
\end{itemize}
This SDE is interpreted in the integral form as
\begin{equation}
\label{Interpreted-SDE}
{z}(t)= z_0+\int_0^t\big(Az+B(z,t)\big)ds+\varepsilon\int_0^t \sigma d{W}_s.
\end{equation}
Here, the first integral is a standard Lebesgue integral, representing the \textit{drift} term, while the second is an Itô integral with respect to the Brownian motion, characterizing the \textit{diffusion} term in the stochastic system. Note that $E(t)$ is a bounded function and $U(x)$ is a bounded-below polynomial growing at infinity like $|x|^{2l}$ with $l$ a positive integer. Under these conditions, the existence and uniqueness of global solutions to the Langevin system \eqref{eq:model} or \eqref{Cauchy-SDE} are guaranteed, as established in \cite{Mattingly2002,SongXie2020}. For further details on the well-posedness and related properties of SDEs, we refer to \cite{Mao2007,Duan15}.

To effectively apply the parameterization method, it may be necessary to consider a modified (or ``cut-off") system. This approach may ensure that both the drift and diffusion functions possess desirable properties, such as global Lipschitz continuity, and that the solution flows exhibit specific regularities. Importantly, the cut-off system is designed to align with the original model except for low-probability events, thereby preserving the essential dynamics. A detailed discussion of the cut-off system for \eqref{eq:model} is provided in Appendix A.

\subsection{Random transformations: Converting SDEs to RDEs}

Before introducing the concept of random invariant manifolds, it is beneficial to first convert an SDE system into a system of random differential equations (RDEs). This conversion is necessary because the existing theory on random invariant manifolds applies to ``\textit{cocycles}" (also known as \textit{random flows} or \textit{random dynamical systems}), and RDEs are indeed differential equations with random coefficients which are easily seen to generate cocycles \cite{Arnold98,Duan15}.

To facilitate the study of dynamical systems in stochastic settings, we briefly review some fundamental concepts. Notably, the choice of the probability space $( {\Omega} , {\mathscr{F}}, \PP)$ in the definition of Brownian motion is arbitrary. However, it is often more appropriate and convenient to work with the \textit{canonical probability space} (also known as the \textit{classic Wiener space}; see \cite{Arnold98}). We now consider 
\begin{equation}\label{Wiener space}
{\Omega}=\{\omega: \omega(\cdot)\in C (\RR,\RR^d), \omega(0)=0 \},
\end{equation}
endowed with the open compact topology so that ${\Omega}$ is a Polish space, $\mathscr{F}$ the corresponding $\sigma$-algebra, and $\PP$ the Wiener measure. In this framework, a sample path of the Brownian motion can be treated as a point in ${\Omega}$, and we write $W_t(\omega)=\omega(t)$. The \textit{Wiener shift} $\{\Phi_t\}$ is defined as a mapping on the canonical probability space: 
\begin{equation}\label{Wiener shift}
\Phi_{t}:{\Omega}\to{\Omega},\;\;\Phi_{t}\omega(\cdot)=\omega(t+\cdot)- \omega(t),
\end{equation}
for each fixed $t\in\RR$. It is well known that $\Phi_t$ is measurable and satisfies that $\Phi_0=\text{Id}$, $\Phi_{t}\Phi_{s}=\Phi_{t+s}$ for $\forall t,s\in\RR$, and $\PP(A)=\PP(\Phi_t^{-1}A)=\PP(\Phi_{-t}A)$ for $\forall A\in\mathscr{F}$. Furthermore, it is closely related to the Brownian motion $W_t$ or the noise ``$dW_t$" by its definition. The equation \eqref{Wiener shift} means that
$W_s(\Phi_t\omega)=W_{t+s}(\omega)-W_t(\omega)\approx dW_t(\omega)$, when $s$ is infinitesimally small.
\\

In this paper, rather than considering the entire space ${\Omega}$, we only need to concentrate on a $\{\Phi_t \}_{t\in\RR}$-invariant subset ${\Omega}^\ast\subset{\Omega}$ of full measure based on the lemma below. For simplicity, we will adopt the same notation and identify ${\Omega}^\ast$ and ${\Omega}$. To appropriately handle the noise term $\sigma dW$ and establish that \eqref{Cauchy-SDE} generates a random dynamical system, we introduce the following preliminary considerations. Specifically, we examine the linear SDE:
\begin{equation}
  \label{eq: linear}
 d\zeta=-\zeta dt+\sigma dW,
\end{equation}
whose solution corresponds to the well-known \textit{Ornstein-Uhlenbeck process}. The following results, which will be essential for our analysis, are derived from Lemma 2.1 in \cite{DuanLS03}.

\begin{lemma} \label{OU}
  (i) There exists a $\{\Phi_t \}_{t\in\RR}$-invariant subset ${\Omega}^\ast\subset{\Omega}$ of full measure with sublinear growth, that is, $\lim_{t\to\pm\infty}{|\omega(t)|}/{|t|}=0$ for a.e. $\omega\in{\Omega}^\ast$.
 (ii) For $\omega\in{\Omega}^\ast$ the random variable 
\begin{equation}
  \label{eq: OU-rv}
\zeta(\omega)=-\int_{-\infty}^0e^{s}\sigma \omega(s)ds
\end{equation}
exists and generates a unique solution of \eqref{eq: linear} given by
\begin{align}
    \label{eq: OU-rp}
    (t,\omega)\to \zeta(\Phi_t\omega)
    =&-\int_{-\infty}^0e^{s}\sigma\Phi_t\omega(s)ds=-\int_{-\infty}^te^{s-t}\sigma\omega(s)ds\notag\\
    =& e^{-t}\zeta(\omega)-\int_0^te^{s-t}\sigma\omega(s)ds.
\end{align}
(iii) For $\omega\in{\Omega}^\ast$ the solution $\zeta(\Phi_t\omega)$ to \eqref{eq: linear} satisfies the following properties:
\begin{itemize}
    \item[a.] It is continuous with respect to $t$;
    \item[b.] It is stationary: $\zeta(\Phi_{t+s}\omega)=\zeta(\Phi_{t}\Phi_{s}\omega)$ for any $t,s\in\mathbb{R}$;
    \item[c.] $\lim_{t\to\pm\infty}{|\zeta(\Phi_t\omega)|}/{|t|}=0$;
    \item[d.] $\lim_{t\to\pm\infty}\frac{1}{t}\int_0^t\zeta(\Phi_s\omega)ds=\mathbb{E}\zeta(\Phi_t\omega)=0$;
    \item[e.] $\mathbb{P}[|\zeta(\Phi_t\omega)|< M]\geqslant 1-\frac{\mathbb{E}|\zeta(\Phi_t\omega)|}{M}$ for $M>0$ and $\mathbb{E}|\zeta(\Phi_t\omega)|<\infty$.
\end{itemize}
\end{lemma}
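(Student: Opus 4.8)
The plan is to follow, with the obvious adaptations to the degenerate multidimensional coefficient $\sigma$, the argument of \cite[Lemma 2.1]{DuanLS03}. For part (i) I would take $\Omega^\ast:=\{\omega:\ \lim_{t\to\pm\infty}|\omega(t)|/|t|=0\}$; that $\PP(\Omega^\ast)=1$ is precisely the strong law of large numbers for Brownian motion (equivalently, a consequence of the law of the iterated logarithm), applied to $W_t$ as $t\to+\infty$ and, using the time-reversal invariance of the Wiener measure on the two-sided space, as $t\to-\infty$. Invariance of $\Omega^\ast$ under the shift is a one-line triangle-inequality estimate: for fixed $s$, $|\Phi_s\omega(t)|/|t|\le |\omega(t+s)|/|t|+|\omega(s)|/|t|$, and since $|\omega(t+s)|/|t|=(|\omega(t+s)|/|t+s|)(|t+s|/|t|)\to 0$ and $|\omega(s)|/|t|\to 0$ as $t\to\pm\infty$, we get $\Phi_s\Omega^\ast\subseteq\Omega^\ast$ for all $s$, hence $\Phi_s\Omega^\ast=\Omega^\ast$.

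For part (ii), fix $\omega\in\Omega^\ast$. Sublinear growth gives a bound $|\omega(s)|\le C_\omega(1+|s|)$, so $e^{s}|\omega(s)|$ is integrable on $(-\infty,0]$ and $\zeta(\omega)$ is a well-defined (measurable, being an almost-sure limit of Riemann sums) random variable; the same bound applied to $\Phi_t\omega\in\Omega^\ast$ produces $\zeta(\Phi_t\omega)$ for every $t$. The chain of integral identities is then a direct computation: insert $\Phi_t\omega(s)=\omega(t+s)-\omega(t)$ from \eqref{Wiener shift}, use $\int_{-\infty}^0e^{s}\,ds=1$ and the change of variables $r=s+t$ to rewrite $\zeta(\Phi_t\omega)$ as an integral over $(-\infty,t]$, then split that integral at $0$ and recall the definition of $\zeta(\omega)$. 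To see that $t\mapsto\zeta(\Phi_t\omega)$ generates \emph{the} solution of \eqref{eq: linear}, an integration by parts — legitimate because the boundary contribution at $-\infty$ vanishes thanks to part (i) — identifies this pathwise object with the stationary Ornstein--Uhlenbeck process $\int_{-\infty}^{t}e^{-(t-s)}\sigma\,dW_s$, which solves \eqref{eq: linear}; alternatively one differentiates the last, absolutely continuous in $t$, expression directly. Uniqueness is read off within the class of tempered (sublinearly growing) solutions: the difference $u$ of two such solutions solves $\dot u=-u$, hence equals $e^{-t}u(0)$, which violates sublinear growth as $t\to-\infty$ unless $u\equiv0$ — equivalently, $\PP$-invariance forces $\mathbb{E}|u(t)|$ constant while the explicit form forces it to $0$.

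For part (iii): (a) continuity in $t$ is immediate from the explicit representation of $\zeta(\Phi_t\omega)$ as a sum of $e^{-t}\zeta(\omega)$ and Lebesgue integrals of the continuous path $\omega$; (b) the identity $\zeta(\Phi_{t+s}\omega)=\zeta(\Phi_t\Phi_s\omega)$ is nothing but the flow property $\Phi_{t+s}=\Phi_t\Phi_s$, while the law of $t\mapsto\zeta(\Phi_t\omega)$ is stationary because each $\Phi_t$ preserves $\PP$; (c) follows from the explicit formula and dominated convergence, majorizing $|\omega(t+s)|/|t|$ by an $e^{s}(1+|s|)$-type integrable function and letting $t\to\pm\infty$; (d) the Wiener shift flow $\{\Phi_t\}$ is ergodic on $(\Omega,\mathscr F,\PP)$ and $\zeta$ is integrable (indeed Gaussian, hence in every $L^p$), so Birkhoff's ergodic theorem gives $\frac1t\int_0^t\zeta(\Phi_s\omega)\,ds\to\mathbb{E}\zeta$ as $t\to\pm\infty$, while $\mathbb{E}\zeta=-\int_{-\infty}^0 e^{s}\sigma\,\mathbb{E}[\omega(s)]\,ds=0$ since $\mathbb{E}[W_s]=0$, and $\mathbb{E}\zeta(\Phi_t\omega)=\mathbb{E}\zeta$ by $\PP$-invariance; (e) is Markov's inequality, $\PP[|\zeta(\Phi_t\omega)|\ge M]\le\mathbb{E}|\zeta(\Phi_t\omega)|/M$, together with $\mathbb{E}|\zeta(\Phi_t\omega)|=\mathbb{E}|\zeta|<\infty$ by Gaussianity.

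I expect the only genuinely non-routine points to lie in part (ii): one must make sure that the \emph{pathwise}, ordinary-integral object $\zeta(\Phi_t\omega)$ really does represent the stationary solution of the It\^o equation \eqref{eq: linear} — which needs the integration-by-parts identification with the boundary term at $-\infty$ controlled via part (i) — and one must isolate the precise class (tempered solutions) in which that solution is unique. The other external input is the ergodicity of the Wiener shift used in (iii.d); everything else is bookkeeping with the cocycle identities for $\Phi_t$ and standard Gaussian/Chebyshev estimates.
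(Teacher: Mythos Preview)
Your proposal is correct and matches the paper's approach exactly: the paper does not supply an independent proof but simply states that the results ``are derived from Lemma 2.1 in \cite{DuanLS03}'', which is precisely the reference you propose to follow (with the routine adaptation to the degenerate $\sigma$). There is nothing to add.
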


\begin{remark}
The sublinear growth property of $\omega(t)$ as well as $\varepsilon\zeta(\Phi_t\omega)$ plays a critical role in controlling the behavior of the noise terms and the solutions of the linearized system \eqref{eq: linear}. Specifically, it helps ensure that random perturbations do not overwhelm the deterministic structure of the system. This property is vital for proving the stability and hyperbolicity of random invariant manifolds, as it effectively bounds the long-term impact of noise on the system's dynamics.
\end{remark}

We now introduce the random transformations
\begin{equation}
  \label{eq: transform}
T(z,\omega)=z-\varepsilon\zeta(\omega),\;\;\;T^{-1}(z,\omega)=z+\varepsilon\zeta(\omega),
\end{equation}
for $z\in\RR^{2d}$ and $\omega\in {\Omega}$, where $\zeta(\omega)$ is given in \eqref{eq: OU-rv}.
\begin{lemma} \label{RDE}
Under the transformation ${Z}(t)=T({z}(t),\Phi_t\omega)$, the SDE \eqref{Cauchy-SDE} is equivalent to the following RDE:
\begin{align}
  \label{eq: random}
\frac{d}{dt}{Z}=A{Z}+{B}({Z}+\varepsilon\zeta(\Phi_t\omega),t)+\varepsilon (A+I)\zeta(\Phi_t\omega),
\end{align}
where the initial state ${Z}(0)={z}(0)=z_0$, and $\zeta(\Phi_t\omega)$ is a continuous Ornstein-Uhlenbeck process given in \eqref{eq: OU-rp}.
\end{lemma}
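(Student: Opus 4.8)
\textit{Proof proposal.} The plan is to establish the claimed equivalence by a short, essentially algebraic computation, exploiting that $T(\cdot,\omega)$ is nothing but an $\omega$-dependent translation, so that the It\^o correction is trivial and the noise term cancels. First recall from Lemma \ref{OU}(ii) that for $\omega\in\Omega^\ast$ the process $\eta(t):=\zeta(\Phi_t\omega)$ is, pathwise, the solution of the linear SDE \eqref{eq: linear}; equivalently, the last line of \eqref{eq: OU-rp} writes it as $\eta(t)=e^{-t}\zeta(\omega)-\int_0^t e^{s-t}\sigma\,\omega(s)\,ds$, a continuous semimartingale with differential $d\eta=-\eta\,dt+\sigma\,dW_t$ relative to the forward filtration. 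Since $z$ solves \eqref{Cauchy-SDE}, i.e. $dz=\big(Az+B(z,t)\big)dt+\varepsilon\sigma\,dW_t$, the process $Z:=z-\varepsilon\eta$ is again a semimartingale and, by linearity of the stochastic differential,
\begin{align*}
dZ &= dz-\varepsilon\,d\eta = \big(Az+B(z,t)\big)dt+\varepsilon\sigma\,dW_t-\varepsilon\big(-\eta\,dt+\sigma\,dW_t\big)\\
&= \big(Az+B(z,t)+\varepsilon\eta\big)dt .
\end{align*}
The decisive point is that the two $\varepsilon\sigma\,dW_t$ contributions cancel exactly --- this is precisely why the auxiliary Ornstein-Uhlenbeck process was built with the same diffusion matrix $\sigma$ --- so $Z$ has zero quadratic variation and is, on every sample path, an absolutely continuous (indeed $C^1$) function of $t$; hence the equation for $Z$ carries no stochastic integral and is a genuine RDE.

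It then remains to re-express the drift in terms of $Z$. Substituting $z=Z+\varepsilon\eta=T^{-1}(Z,\Phi_t\omega)$ and using linearity of $A$ gives
\[
\frac{d}{dt}Z = A\big(Z+\varepsilon\eta\big)+B\big(Z+\varepsilon\eta,t\big)+\varepsilon\eta = AZ+B\big(Z+\varepsilon\zeta(\Phi_t\omega),t\big)+\varepsilon(A+I)\zeta(\Phi_t\omega),
\]
which is exactly \eqref{eq: random}, with transformed initial datum $Z(0)=T(z_0,\omega)=z_0-\varepsilon\zeta(\omega)$. All of this holds for $\omega$ in the full-measure, $\{\Phi_t\}$-invariant set $\Omega^\ast$ on which $\eta(t)$ is defined, continuous in $t$, and of sublinear growth by Lemma \ref{OU}; outside $\Omega^\ast$ there is nothing to prove. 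The converse implication follows by running the computation backwards: if $Z$ solves \eqref{eq: random}, then $z:=Z+\varepsilon\zeta(\Phi_t\omega)=T^{-1}(Z,\Phi_t\omega)$ satisfies $dz=dZ+\varepsilon\,d\eta=\big(Az+B(z,t)\big)dt+\varepsilon\sigma\,dW_t$, i.e. \eqref{Cauchy-SDE}; together with uniqueness of solutions for both equations, this yields a genuine one-to-one correspondence between their solution processes.

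The only step I would treat with real care --- and the sole genuine obstacle --- is justifying that $t\mapsto\zeta(\Phi_t\omega)$ really is a semimartingale adapted to the forward filtration, since a priori it is defined through the two-sided Wiener shift and the improper integral \eqref{eq: OU-rv} over $(-\infty,0]$. Here one leans on the last identity in \eqref{eq: OU-rp}, which represents it as $e^{-t}\zeta(\omega)$ plus a forward It\^o integral, thereby legitimizing the differential $d\eta=-\eta\,dt+\sigma\,dW_t$ and hence the (affine, and therefore elementary) change-of-variables computation above. The remaining bookkeeping --- joint measurability of $(t,\omega)\mapsto Z(t,\omega)$, continuity in $t$, and the fact that the RDE \eqref{eq: random} generates a cocycle over the metric dynamical system $(\Omega,\mathscr{F},\PP,\{\Phi_t\})$ --- is then inherited from the corresponding properties of $z(t)$ and of $\zeta(\Phi_t\omega)$ recorded in Lemma \ref{OU}(iii), together with the $\{\Phi_t\}$-invariance of $\Omega^\ast$.
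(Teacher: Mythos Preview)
Your proof is correct and follows essentially the same route as the paper's: substitute $z=Z+\varepsilon\zeta(\Phi_t\omega)$, use that $\zeta(\Phi_t\omega)$ solves \eqref{eq: linear}, and read off \eqref{eq: random}, with equivalence following from invertibility of $T$. You simply supply the details the paper omits, including the explicit cancellation of the $\varepsilon\sigma\,dW_t$ terms and the semimartingale justification for $\eta$; note also that your computed initial condition $Z(0)=z_0-\varepsilon\zeta(\omega)$ is what the transformation actually gives, so the lemma's stated $Z(0)=z_0$ appears to be a slip.
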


\begin{proof}
Substituting ${z}(t)={Z}(t)+\varepsilon\zeta(\Phi_t\omega)$ into the SDE \eqref{Cauchy-SDE} and noting that $\zeta(\Phi_t\omega)$ satisfies the linear SDE \eqref{eq: linear}, we obtain \eqref{eq: random} through straightforward calculations. Moreover, the equivalence between \eqref{Cauchy-SDE} and \eqref{eq: random} follows from the invertibility of the random transformation.
\end{proof}

In contrast to the SDE \eqref{Cauchy-SDE}, no stochastic differential appears in RDE \eqref{eq: random}. This equation has a unique solution for each $\omega\in\Omega$, and the solution can be expressed as
\begin{equation}
  \label{eq: Z}
Z(t)=z_0+\int_0^t A{Z}+{B}({Z}+\varepsilon\zeta(\Phi_s\omega),s)+\varepsilon (A+I)\zeta(\Phi_s\omega)ds.
\end{equation}
Therefore, the solution mapping generates a cocycle
\begin{equation}
  \label{eq: Z-mapping}
F : \RR^{2d}\times\RR\times\Omega \to \RR^{2d},\;\;F (z_0,t,\omega) =Z_t(z_0,\omega),
\end{equation}
 which is $(\mathscr{B}(\RR^{2d})\otimes \mathscr{B}(\RR)\otimes \mathscr{F},\mathscr{B}(\RR^{2d}))$-measurable such that
\begin{align}
F(z_0,0,\omega)=&z_0, \notag\\
F(z_0,t+s,\omega)=&F\big(F(z_0,t,\omega),s,\Phi_t\omega\big), \notag
\end{align}
for $t,s\in\RR$, $\omega\in\Omega$ and $z_0\in\RR^{2d}$. With the inverse conversion, the mapping 
\begin{equation}
  \label{eq: z-mapping}
\widehat{F}:=T^{-1}(\cdot,\Phi_t\omega)\circ F(T(z,\omega),t,\omega)
\end{equation}
is thus defined as the cocycle for the original SDE system \eqref{Cauchy-SDE}.


\begin{remark}
    We note that such a conversion from SDEs to RDEs is often possible but not always straightforward \cite{Arnold98,Imkeller2001,Duan15}. Similar transformations have been widely used to study random manifolds for certain stochastic systems; see, for instance, \cite{DuanLS03,DuanLS04}. Once random invariant manifolds are obtained for the RDE, they can be translated back to the original SDE through the inverse transformation; see Lemma \ref{sNHIM} in the next subsection.
\end{remark}

\subsection{Random invariant manifolds}\label{section invariance equation}
We recall that a multifunction $\mathcal{M}=\{\mathcal{M}(\omega)\}_{\omega\in \Omega}$, consisting of nonempty closed sets $\mathcal{M}(\omega)$, $\omega\in \Omega$, contained in $\RR^{2d}$ (or, more generally, a separable Banach space) is called a \textit{random set} if
$$
\omega \mapsto \inf_{z^{\prime}\in \mathcal{M}(\omega)} \|z-z^{\prime} \|
$$
is a random variable for any $z\in \RR^{2d}$. If each set $\mathcal{M}(\omega)$ is a manifold, then $\mathcal{M}$ is referred to as a \textit{random manifold}.

\begin{definition} 
 \label{defn: RIM}
(Random invariant manifolds) A random manifold $\mathcal{M}$ is called a \textit{random invariant manifold} for a cocycle $F$ if
\begin{equation}
  \label{eq:invariance random}
  F\big(\mathcal{M}(\omega),t,\omega\big)=\mathcal{M}(\Phi_t\omega),
\end{equation}
for $t\in\RR$, and $\omega\in\Omega$.
\end{definition} 
\begin{remark}
The geometric interpretation of \eqref{eq:invariance random} closely resembles the deterministic case: for each sample variable, the range of $\mathcal{M}$ remains invariant under the (random) flows of $F$. In this context, the manifold itself becomes a random variable, and it is possible to study statistical properties such as expected values of several events such as the manifold going trough a point, deviations of the above and so on. This formulation of the invariant manifold problem has become standard in the literature \cite{Arnold98}; see also \cite{Duan15}.
\end{remark}


Later in Definition \ref{defn:NHRIM}, we will further provide the definition of normally hyperbolic random invariant manifolds. Utilizing the conjugacy between the SDE \eqref{Cauchy-SDE} and the RDE \eqref{eq: random} via the random transformation \eqref{eq: transform}, we arrive at the following result.
\begin{lemma}\label{sNHIM}
Let $\mathcal{M}$ be the random invariant manifold for the RDE \eqref{eq: random}. Then 
$$
\widehat{\mathcal{M}}=\big\{T^{-1}\big(\mathcal{M}(\omega),\omega\big)\big\}_{\omega\in\Omega}=\Big\{\mathcal{M}(\omega)+\varepsilon\int_{-\infty}^0e^{-As}\sigma dW_s(\omega)\Big\}_{\omega\in\Omega}
$$
 is a random invariant manifold for the original SDE \eqref{Cauchy-SDE}. Furthermore, if $\mathcal{M}$ is normally hyperbolic, then so is $\widehat{\mathcal{M}}$.
\end{lemma}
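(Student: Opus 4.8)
The plan is to transport the invariance property through the conjugacy map $T^{-1}(\cdot,\omega)$ and then check that the transported manifold retains its defining structural features (being a random manifold, and normal hyperbolicity). First I would recall from \eqref{eq: z-mapping} that the cocycle for the SDE is $\widehat{F}(z,t,\omega)=T^{-1}(\cdot,\Phi_t\omega)\circ F(T(z,\omega),t,\omega)$, and from Lemma \ref{RDE} that $T^{-1}(z,\omega)=z+\varepsilon\zeta(\omega)$ with $\zeta(\omega)=-\int_{-\infty}^0 e^s\sigma\omega(s)\,ds=\int_{-\infty}^0 e^{-As}\sigma\,dW_s(\omega)$ (the last identity being an integration-by-parts rewriting of the Ornstein--Uhlenbeck variable in terms of the noise, which justifies the explicit formula in the statement). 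The core computation is then: apply $\widehat{F}(\cdot,t,\omega)$ to $\widehat{\mathcal M}(\omega)=T^{-1}(\mathcal M(\omega),\omega)$, use invertibility of $T$ to peel off the inner $T$, invoke the RDE-invariance $F(\mathcal M(\omega),t,\omega)=\mathcal M(\Phi_t\omega)$ from Definition \ref{defn: RIM}, and observe that the outer $T^{-1}(\cdot,\Phi_t\omega)$ turns $\mathcal M(\Phi_t\omega)$ precisely into $\widehat{\mathcal M}(\Phi_t\omega)$. This is essentially a one-line chase once the conjugacy is written down.

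Next I would verify that $\widehat{\mathcal M}$ is genuinely a random manifold in the sense recalled before Definition \ref{defn: RIM}. Since for each fixed $\omega$ the map $z\mapsto T^{-1}(z,\omega)=z+\varepsilon\zeta(\omega)$ is a translation (an affine diffeomorphism of $\RR^{2d}$), each fibre $\widehat{\mathcal M}(\omega)$ is again a nonempty closed submanifold. For measurability I would show that $\omega\mapsto\inf_{z'\in\widehat{\mathcal M}(\omega)}\|z-z'\|=\inf_{y\in\mathcal M(\omega)}\|z-\varepsilon\zeta(\omega)-y\|$ is measurable; this follows because $(z,\omega)\mapsto\operatorname{dist}(z,\mathcal M(\omega))$ is jointly measurable (Carathéodory-type argument, or separability of $\mathcal M$) and $\omega\mapsto\zeta(\omega)$ is measurable by Lemma \ref{OU}(ii), so the composition $\omega\mapsto\operatorname{dist}(z-\varepsilon\zeta(\omega),\mathcal M(\omega))$ is measurable. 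Hence $\widehat{\mathcal M}$ is a random set, and a random manifold.

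For the last assertion — that normal hyperbolicity is inherited — I would use Definition \ref{defn:NHRIM} (referenced as forthcoming in the excerpt): normal hyperbolicity of $\mathcal M$ is expressed through an invariant splitting of the linearized cocycle along $\mathcal M$ with exponential dichotomy rates. Because $T^{-1}(\cdot,\omega)$ is, fibrewise, a translation, its differential is the identity on $\RR^{2d}$; therefore the linearized cocycle of $\widehat F$ along $\widehat{\mathcal M}$ is conjugate to the linearized cocycle of $F$ along $\mathcal M$ by the identity tangent map, and the invariant subbundles, together with their dichotomy constants and exponents, carry over verbatim. The one point requiring a word of care is that $T$ depends on $\omega$ and the conjugacy intertwines the base shift $\Phi_t$ correctly — but that is exactly what the cocycle identity \eqref{eq: z-mapping} encodes, so no new estimate is needed. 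The main (and only mild) obstacle is the measurability bookkeeping for $\widehat{\mathcal M}$ as a random set; everything else is a direct transport along an affine, measurably-parameterized conjugacy, so I would keep that part brief and concentrate the exposition on the distance-function measurability argument.
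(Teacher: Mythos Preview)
Your proposal is correct and follows the same conjugacy-transport strategy as the paper: both chase $\widehat F(\widehat{\mathcal M}(\omega),t,\omega)$ through the relation \eqref{eq: z-mapping}, reduce to the RDE invariance $F(\mathcal M(\omega),t,\omega)=\mathcal M(\Phi_t\omega)$, and then argue that $T^{-1}$ preserves normal hyperbolicity. You are in fact more thorough than the paper---it omits the random-set measurability check you supply, and your hyperbolicity argument via $D_zT^{-1}=\mathrm{Id}$ (so the transfer matrices for $F$ and $\widehat F$ coincide along the respective manifolds) is more direct than the paper's brief appeal to the sublinear growth of $\zeta(\Phi_t\omega)$ from Lemma~\ref{OU}.
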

\begin{proof}
Considering the relationship between $F$ and $\widehat{F}$ provided in \eqref{eq: z-mapping}, we have
\begin{align}
\widehat{F}\big(\widehat{\mathcal{M}}(\omega),t,\omega\big)
=&T^{-1}(\cdot,\Phi_t\omega)\circ F\big(T(\widehat{\mathcal{M}}(\omega),\omega),t,\omega\big)\notag\\
=&T^{-1}(\cdot,\Phi_t\omega)\circ F\big(\widehat{\mathcal{M}}(\omega),t,\omega\big)\notag\\
=&T^{-1}\big(\widehat{\mathcal{M}}(\Phi_t\omega),\Phi_t\omega\big) =\widehat{\mathcal{M}}(\Phi_t\omega),\notag
\end{align}
for $t\in\RR$ and $\omega\in\Omega$. Observe that $t\to \zeta(\Phi_t\omega)$ exhibits a sublinear growth rate (see Lemma \ref{OU}). The random transform $T^{-1}$ preserves the hyperbolicity property. Thus we are done with the proof.
\end{proof}

\subsection{The parameterization method and random invariance equations}

We highlight that one of the main novelties of our work is the inclusion of a quasi-periodic function $E(t)$ in the model. To address this, we aim to generalize the concept of the parameterization method \cite{HdlL06a,HdlL06b,Haro} and reformulate the geometric problem of random invariant manifolds as a functional analysis problem.

By introducing $d\theta=\alpha dt$ to the RDE \eqref{eq: random}, we consider the natural modification of the random flow:
$$F : \RR^{2d}\times \mathbb{T}^m\times\RR\times\Omega \to \RR^{2d},\;\;F (z_0,\theta,t,\omega) =Z_t(z_0,\theta,\omega)|_{\theta=\alpha t}.$$ Abusing notation, we denote by $F(z,\theta,\omega)=F(z,\theta,1,\omega)$ the time-1 map of the evolution. This leads us to search for invariant manifolds for the following system:
\begin{align}\label{discrete-stochastic-system} 
\bar{Z}={F}({Z},\theta,\omega),\;\;\bar{\theta}=\theta+\alpha,\;\;\bar{\omega}=\Phi_1\omega,
\end{align}
where $Z\in\RR^{2d}$, $\theta\in\TT^m$, $\omega\in\Omega$ are variables, $\alpha\in\RR^m$ is the rotation vector, and $\Phi_1$ represents the time-1 Wiener shift. Mathematically, the system in the form of \eqref{discrete-stochastic-system} is referred to as a \textit{random (quasi-periodic) skew-product}:
\begin{align}\label{skew-product} 
(F,\alpha,\Phi): \; \RR^{2d}\times\TT^m\times\Omega &\to \RR^{2d}\times\TT^m\times\Omega \notag\\
\;\;\;\;(Z,\theta,\omega)\;\;\;\;&\to\;\;\;\;(\bar{Z},\bar{\theta},\bar{\omega}),
\end{align}
where $\RR^{2d}\times\TT^m\times\Omega$ is understood as a trivial measurable vector bundle \cite{Arnold98}.

\begin{remark}
Clearly, for $(F,\alpha,\Phi)$, the space variable $Z$ depends on realizations of the noise, while the internal phase $\theta$ influences the motion of $Z$ but remains unaffected by it. Here, we focus on the discrete case, as it is more suitable for numerical implementations. Indeed, the proofs of our main results are designed to be readily implementable on a computer. Moreover, we remark that results for flows can often be derived from those for maps using standard techniques, such as the Poincar\'e trick \cite{HdlL06b}. However, we will not include these derivations in this paper.
\end{remark}

\begin{remark}\label{rk:deterministic}
An important special case occurs when the system takes the form $F(z,\theta,\omega)= F_0(z,\theta)+F_1(z,\theta,\omega)$, where $F_1$ is small (i.e., the random forcing is small). For $F_1\equiv 0$ (i.e., in the absence of random noise), the torus 
$\mathcal{K}_0=\{(K_0(\theta),\theta):\theta\in\TT^m \}
$ has been shown to be invariant under the deterministic skew-product system \cite{ZhangL18}, where $K_0:\TT^m\to\RR^{2d}$ is an embedding such that 
\begin{equation}
  \label{eq:deterministic}
  F_0(K_0(\theta),\theta)=K_0(\theta+\alpha).
\end{equation}
Motivated by this, in the presence of random perturbations, it is natural to look for random invariant manifolds that are close to $\mathcal{K}_0$ in the random systems.
\end{remark}

Let $K:\TT^m\times\Omega\to \RR^{2d}$ be a random injective immersion. It defines a random submanifold 
\begin{equation}
\mathcal{K}=\big\{\mathcal{K}_\theta(\omega)=\big(K(\theta,\omega),\theta\big):\theta\in\TT^m \big\}_{\omega\in\Omega}
\end{equation}
which is indeed a random torus, and we say that $\mathcal{K}$ is parameterized by $K$. According to Definition \ref{defn: RIM}, the random submanifold $\mathcal{K}$ is invariant if
\begin{equation}\label{random bundle invariant-0}
F\big(K(\theta,\omega),\theta,t,\omega\big)=K(\theta+t\alpha,\Phi_t\omega).
\end{equation}
In the special case with $t=1$, we obtain 
\begin{equation}\label{random bundle invariant}
F\big(K(\theta,\omega),\theta,\omega\big)=K(\theta+\alpha,\Phi_1\omega).
\end{equation}
Note that when $K$ is continuous, the equation \eqref{random bundle invariant} is equivalent to \eqref{random bundle invariant-0} due to the following ergodicity.

\begin{lem}
  The map
  $T_{\alpha,\Phi_1}(\theta,\omega)=(\theta+\alpha,\Phi_1\omega)$ is ergodic in
  $\TT^m\times\Omega$.
\end{lem}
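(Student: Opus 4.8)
The plan is to prove ergodicity by combining the known ergodicity of the two factors — the rigid rotation $R_\alpha(\theta)=\theta+\alpha$ on $\TT^m$ (when $\alpha$ is rationally independent together with $1$, i.e. the frequencies are Diophantine/non-resonant, which is the standing hypothesis) and the Bernoulli-type mixing of the Wiener shift $\Phi_1$ on $(\Omega,\mathscr F,\PP)$ — via a Fourier-analytic argument on the first coordinate. Concretely, suppose $g\in L^2(\TT^m\times\Omega)$ is $T_{\alpha,\Phi_1}$-invariant, i.e. $g(\theta+\alpha,\Phi_1\omega)=g(\theta,\omega)$ for a.e.\ $(\theta,\omega)$. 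Expand $g$ in a Fourier series in $\theta$: $g(\theta,\omega)=\sum_{k\in\ZZ^m} \hat g_k(\omega)\, e^{2\pi i\langle k,\theta\rangle}$, with $\hat g_k\in L^2(\Omega)$ and $\sum_k \|\hat g_k\|_{L^2(\Omega)}^2<\infty$. Substituting the invariance relation and matching Fourier coefficients in $\theta$ gives, for every $k$,
\begin{equation}\label{eq:fourier-inv}
\hat g_k(\Phi_1\omega)= e^{-2\pi i\langle k,\alpha\rangle}\, \hat g_k(\omega)\quad\text{for a.e. }\omega.
\end{equation}

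Next I would exploit \eqref{eq:fourier-inv}. Taking absolute values, $|\hat g_k(\Phi_1\omega)|=|\hat g_k(\omega)|$, so $|\hat g_k|$ is a $\Phi_1$-invariant function on $\Omega$; since $\Phi_1$ (the time-one Wiener shift) is ergodic — indeed mixing — on the canonical Wiener space, $|\hat g_k|$ is constant a.e., say $|\hat g_k(\omega)|\equiv c_k$. For $k\neq 0$, the factor $e^{-2\pi i\langle k,\alpha\rangle}\neq 1$ by the non-resonance of the frequency vector $(\alpha,1)$ (equivalently $\langle k,\alpha\rangle\notin\ZZ$ for $k\neq0$), and I claim this forces $c_k=0$. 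If $c_k\neq0$ then $u:=\hat g_k/c_k$ is a unimodular function with $u(\Phi_1\omega)=\lambda\, u(\omega)$ for the eigenvalue $\lambda=e^{-2\pi i\langle k,\alpha\rangle}\neq1$; but $\Phi_1$ is weakly mixing (being mixing), hence has no nonconstant $L^2$ eigenfunctions and $1$ is its only eigenvalue, a contradiction. Therefore $\hat g_k\equiv0$ for all $k\neq0$, so $g(\theta,\omega)=\hat g_0(\omega)$ depends on $\omega$ alone; then \eqref{eq:fourier-inv} with $k=0$ says $\hat g_0$ is $\Phi_1$-invariant, hence constant by ergodicity of $\Phi_1$. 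Thus $g$ is a.e.\ constant, which is precisely ergodicity of $T_{\alpha,\Phi_1}$.

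The main obstacle — and the point that deserves care rather than routine verification — is establishing the ergodicity (in fact the mixing, or at least the weak mixing / absence of nontrivial eigenvalues) of the time-one Wiener shift $\Phi_1$ on the canonical Wiener space. This is a classical fact: the Wiener shift is a Bernoulli flow (its increments over disjoint unit intervals are i.i.d.), so $\Phi_1$ is a Bernoulli automorphism and in particular mixing of all orders; I would cite this (e.g.\ \cite{Arnold98} or a standard reference on the Wiener process) rather than reprove it. A secondary point to state precisely is the non-resonance assumption on $\alpha$: the argument needs $\langle k,\alpha\rangle\notin\ZZ$ for all $k\in\ZZ^m\setminus\{0\}$, i.e.\ that $1,\alpha_1,\dots,\alpha_m$ are rationally independent, which is exactly the quasi-periodicity hypothesis underlying the whole construction. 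Everything else — the $L^2$ Fourier expansion, coefficient matching, and the eigenfunction argument — is standard and can be carried out quickly.
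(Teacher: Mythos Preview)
Your proposal is correct and follows essentially the same approach as the paper: both reduce to showing that every $L^2$ invariant function is constant, Fourier-expand in $\theta$, obtain the functional equation $\hat g_k(\Phi_1\omega)=e^{-2\pi i\langle k,\alpha\rangle}\hat g_k(\omega)$ for each mode, handle $k=0$ by ergodicity of $\Phi_1$, and kill the $k\neq0$ modes using a mixing-type property of the Wiener shift. The only cosmetic difference is that for $k\neq0$ the paper argues via decay of correlations (iterating the relation, multiplying by the conjugate, and using $\mathbb{E}[\hat g_k\circ\Phi_n\cdot\hat g_k^*]\to|\mathbb{E}\hat g_k|^2=0$), whereas you invoke the equivalent spectral statement that a weakly mixing transformation has no eigenvalue other than $1$; your formulation is arguably the cleaner of the two.
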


\begin{proof}
  It suffices to show that the only $L^2$-invariant function over $\TT^m\times\Omega$ is constant. We
  consider the partial Fourier series $
  g(\theta,\omega)=\sum_j\hat{g}_{j}(\omega)e^{2\pi ij\theta}$ for any $g\in L^2(\TT^m\times\Omega)$. The function $g$ is invariant if and only if, for every $j\in\ZZ$, the following condition holds:
  \begin{equation}
    \label{eq:inv_func}
    \hat{g}_j(\omega)=e^{2\pi ij\alpha}\hat{g}_j(\Phi_1\omega).
  \end{equation}
  It is clear that when $j=0$, the equation \eqref{eq:inv_func} simplifies to $\hat{g}_0(\omega)=\hat{g}_0(\Phi_1\omega)$, which is constant by the ergodicity of the shift $\Phi$. Thus, we only need to discuss the case for $j\neq 0$. Notice that $\mathbb{E}(\hat{g}_j)=0$ when $j\neq 0$. Iterating the equation \eqref{eq:inv_func} and multiplying by the conjugate $\hat{g}^*_j$, we infer that, for any integer $n>0$, 
    $|\hat{g}_j|^2=e^{2\pi ij\alpha n}\hat{g}_j(\Phi_n\omega)\hat{g}^*_j(\omega).$ We can now take the expected value and select a sequence of $n$ such that $e^{2\pi ij\alpha n}\to 1.$ Applying the decay of correlations and the fact that the expected values of $g_j$ is zero, we obtain that the right-hand side can be made as small as desired. Hence $\mathbb{E}(|\hat{g}_j|^2)=0$. So these Fourier coefficients are zero.
\end{proof}

Similar to the deterministic case, both \eqref{random bundle invariant-0} and \eqref{random bundle invariant} are referred to as the \textit{random invariant equations}. These equations, along with their variants, will form the centerpiece of our analysis in the following sections.

\section{Existence and persistence of random invariant tori}\label{sec:3}

With the preliminary work outlined in Section \ref{Setups}, we convert the original SDE \eqref{Cauchy-SDE} into the equivalent RDE \eqref{eq: random}, and eventually reach a random skew-product system \eqref{discrete-stochastic-system}. The key task now is to search for invariant tori of \eqref{discrete-stochastic-system}, parameterized by a mapping that satisfies the random invariant equation \eqref{random bundle invariant}. It is important to note that \eqref{discrete-stochastic-system} is a very general system, and the results presented in this paper are applicable to a wide range of models.

\subsection{Spaces of differentiable and measurable functions}

In random settings, the dependence on $\omega$ needs to be merely measurable, not even continuous, since the sample space $\Omega$ is not assumed to be a metric space (let alone a manifold) \cite{Cheng2019}. To obtain sharp results on the regularity of the invariant manifolds, it is important to distinguish between the regularities of the functions with respect to the horizontal variables $\theta$ and the vertical variables $z$. The following spaces are introduced to facilitate induction arguments for the functional equations. They are an adaptation of the definitions used in the deterministic parameterization method \cite{CFdelaL2003a,CFdelaL2003b}.

\begin{definition}\label{fun-space}
    ($\mathcal{L}^\infty (C^\Sigma)$-spaces) Let $\Sigma \subset \mathbb{N}^2$ be a subsets such that: $(i^{\prime},j^{\prime})\in \Sigma$ if $(i,j)\in \Sigma$ and $i^{\prime}\leqslant i$, $j^{\prime}\leqslant j$.  We denote by 
\begin{align}\label{LC-space-r}
&\mathcal{L}^\infty (C^\Sigma)=\mathcal{L}^\infty ({\Omega}, C^\Sigma(\RR^{2d}\times \TT^m,\RR^{2d}) ) \notag\\ 
=&\big\{ F : \RR^{2d}\times\TT^m\times\Omega \to \RR^{2d}\;|\;F\in (\mathscr{B}(\RR^{2d})\otimes \mathscr{B}(\TT^m)\otimes \mathscr{F},\mathscr{B}(\RR^{2d})),\notag\\
 & \text{  $D_\theta^iD_z^jF(\cdot,\cdot,\omega)$ exists and is continuous and bounded for each $(i,j)\in \Sigma$}\notag\\
 & \text{  and a.e. $\omega\in {\Omega}$}\},
\end{align}
and define the norm on $\mathcal{L}^\infty (C^\Sigma)$ by 
\begin{align}
\|F\|_{\mathcal{L}^\infty (C^\Sigma)}
=&\sup_{i,j\in\NN,\;(i,j)\in \Sigma}|D_\theta^iD_z^jF(z,\theta,\omega)|_\infty \notag\\
=&\sup_{i,j\in\NN,\;(i,j)\in \Sigma}  \inf \big\{c\in\overline{\RR}:\;\PP\{\omega: |D_\theta^iD_z^jf(z,\theta,\omega)|>c\}=0\big\},\notag
\end{align}
where $|\cdot|_\infty$ is the essential supremum norm \cite{Ash2000}. In other words, the function $D_\theta^iD_z^jF(\cdot,\cdot,\omega)$ may have countable discontinuities and is bounded outside a set of measure 0. 
Then the induced metric on $\mathcal{L}^\infty (C^\Sigma)$ makes it into a Banach space

In particular, we say $F$ is $\mathcal{L}^\infty (C^{r,s})$, (jointly) $\mathcal{L}^\infty (C^{r})$, or $\mathcal{L}^\infty (C^{\Sigma_{r,s}})$, if $F\in \mathcal{L}^\infty ( C^\Sigma)$ with $\Sigma_1=\{(i,j)\in \mathbb{N}^2\; | \; i\leqslant r, j\leqslant s\},$ $\Sigma_2=\{(i,j)\in \mathbb{N}^2\; | \; i+j\leqslant r\},$ or $\Sigma_3=\Sigma_{r,s}=\{(i,j)\in \mathbb{N}^2\; | \; i\leqslant r, i+j\leqslant r+s\},$ respectively. It is clear that $\mathcal{L}^\infty (C^{r+s} )\subset \mathcal{L}^\infty (C^{\Sigma_{r,s}}) \subset \mathcal{L}^\infty (C^{r,s})$.
\end{definition}

\subsection{Transfer operators and the normal hyperbolicity}

 Let $\mathbf{\Gamma}$ be the space of functions $\kappa: \TT^m \times \Omega \to \mathbb{R}^{2d}$. It can also be understood as the space of random sections, owing to the triviality of the random bundle $\mathbb{R}^{2d} \times \TT^m \times \Omega$. We can then define the regularity of $\mathbf{\Gamma}$ in a manner similar to that in $\mathcal{L}^\infty (C^\Sigma)$. For example, we can consider random bounded sections (${\mathbf{\Gamma}}_b$), random continuous sections (${\mathbf{\Gamma}}_{C^0}$), $\mathcal{L}^\infty (C^r)$ sections (${\mathbf{\Gamma}}_{C^r}$), and so on.
 
We can now rewrite the random invariance equation \eqref{random bundle invariant} in a more functional parlance. Specifically, $K$ is a generalized (or random) fixed point of the graph transform functional $\mathscr{G}: \mathbf{\Gamma} \to \mathbf{\Gamma}$, defined by
 \begin{align}
\mathscr{G}(K)(\theta,\omega)=F\big(K(\theta-\alpha,\Phi_{-1}\omega ),\theta-\alpha,\Phi_{-1}\omega \big).
\end{align}
The linearized dynamics around the corresponding graph (i.e., $\mathcal{K}$) is described by the vector bundle map:
\begin{align}\label{random vector bundle map}
(M,\alpha,\Phi): \; \RR^{2d}\times\TT^m\times\Omega &\to \RR^{2d}\times\TT^m\times\Omega \notag\\
\;\;\;\;({\bf v},\theta,\omega)\;\;\;\;\;&\to\;(\bar{\bf v},\bar{\theta},\bar{\omega}):=\big(M(\theta,\omega){\bf v},\theta+\alpha,\Phi_1\omega\big),
\end{align}
where $M(\theta,\omega)=D_zF\big(K(\theta,\omega),\theta,\omega\big)$ is the \textit{(random) transfer matrix}. To encode the forward and backward dynamics of the linear skew-product, we adopt the following cocycle notation:
\begin{align}
M(\theta,\omega,N)=
\left\{
\begin{array}{rl}
M\big(\theta+(N-1)\alpha,\Phi_{N-1}\omega\big)\cdots M(\theta,\omega),\quad\quad&\text{ for } N>0,\\
\text{Id},\quad\quad\quad\quad\quad\quad\quad\quad\quad\quad\quad\quad\quad\quad\quad\quad\quad&\text{ for } N=0,\\
M\big(\theta+N\alpha,\Phi_{N}\omega\big)^{-1}\cdots M(\theta-\alpha,\Phi_{-1}\omega)^{-1},&\text{ for } N<0. \notag\\
\end{array}
\right.
\end{align} 
Here, for $N< 0$, the inverses of the matrices are assumed to be well-defined. The linear skew-product $(M,\alpha,\Phi)$ induces a transfer operator $\mathscr{M}:  {\bf\Gamma}  \to {\bf\Gamma} $, given by
 \begin{align}
\mathscr{M}[\kappa](\theta,\omega)=M\big(\theta-\alpha,\Phi_{-1}\omega \big)\kappa\big(\theta-\alpha,\Phi_{-1}\omega \big).
\end{align}
The operator norm of the transfer operator $\mathscr{M}$ is defined as
$$\|\mathscr{M}\|=\sup\{ \|\mathscr{M}\kappa\|_{\mathcal{L}^\infty (C^r)}: \kappa \in \Gamma_{C^r}, \|\kappa\|_{\mathcal{L}^\infty (C^r)}=1\}.$$

\begin{remark}\label{spectrum-remark}
Note that the transfer operator $\mathscr{M}$ is defined in the sense of $\omega$-wise, and it shares many of the same properties as in the deterministic case for each fixed $\omega\in\Omega$. 
Particularly necessary to point out that the spectrum of the transfer operator does not depend on the space it is considered to act on, since the motion on the base is a rotation here \cite{HarodlLlave2003,HdlL06b}. This means that we have $\text{Spec} (\mathscr{M},{\bf\Gamma}_b)=\text{Spec} (\mathscr{M},{\bf\Gamma}_{C^r})$ for all $\omega\in\Omega$. This fact can be leveraged to obtain certain results with strong regularity by formulating weaker spectral gap assumptions. 
\end{remark}

We have the following definition for normally hyperbolic random invariant tori, referring to \cite{Liji2013}.
\begin{definition} \label{defn:NHRIM}
(Normal hyperbolicity) A random invariant torus $\mathcal{K}$ of the random skew-product $(F,\alpha,\Phi)$, parameterized by $K\in {\bf\Gamma}_{C^r}$, is said to be ($\omega$-wise) normally hyperbolic, if the corresponding linear skew-product $(M,\alpha,\Phi)$ is normally hyperbolic. Specifically, if for all $\omega\in \Omega$ and $z\in \mathcal{K}(\omega)$, there exists a splitting which is $C^r$ in $z$ and measurable in $\omega$:
$$
\RR^{2d}\times\TT^m =\mathscr{E}^S(z,\omega)\oplus \mathscr{E}^U(z,\omega)
$$
of closed subspaces with associated projections $\Pi^U(z,\omega)$ and $\Pi^S(z,\omega)$, and constants $C(\omega)>0$ and $0<\lambda_S(\omega) <1<\lambda_U(\omega)$ such that:
\begin{enumerate}
\item[$\bullet$] ${\bf v}\in\mathscr{E}^S$ if and only if $|M(\theta,\omega,N){\bf v}|\leqslant C \lambda_S^N(\omega)|v|$ for all $N\geqslant 0$;
\item[$\bullet$] ${\bf v}\in\mathscr{E}^U$ if and only if $|M(\theta,\omega,N){\bf v}|\leqslant C \lambda_U^N(\omega)|v|$ for all $N\leqslant 0$.
\end{enumerate}
In particular, the stable bundle $\mathscr{E}^S$ and the unstable bundle $\mathscr{E}^U$ are invariant under the action of the linear skew-product $(M,\alpha,\Phi)$, defining linear skew-products $(\Lambda_S,\alpha,\Phi)$ and $(\Lambda_U,\alpha,\Phi)$, respectively. In particular, $(\Lambda_U,\alpha,\Phi)$ is invertible.
\end{definition}

\begin{remark}\label{hyperbolicity-remark}
Note that the $\omega$-wise normal hyperbolicity is equivalent to the condition that, for each $\omega\in \Omega$, the spectrum of the transfer operator $\mathscr{M}$ does not intersect the unit circle $\{ y\in\CC:|y|=1\}$ \cite{Mane74}. This means that for every $y$ such that $|y|=1$, there exists a positive constant $c_H$ (known as the hyperbolicity bound) such that $\|(\mathscr{M}-y \text{Id})^{-1}\|\leqslant c_H$.
\end{remark}



\subsection{The main theorem for random invariant tori}

We now present the main theorem on the existence and persistence of random invariant tori. This theorem is mainly inspired by the deterministic work of Haro and de la Llave \cite{HdlL06b}, with the goal of generalizing and enhancing their results to the random setting. We highlignt that, for a fixed $\omega\in\Omega$, many of the arguments in this theorem can be reduced to the deterministic case.  Particular attention, however, is required for the terms involving the Wiener shift and the measurability with respect to $\omega$.

\begin{theorem}\label{Existence-Persistence}
Consider a random skew-product $(F,\alpha,\Phi)$ as given in\eqref{skew-product}. Let $U \subset \mathbb{R}^{2d}$ be an open set and $r \geqslant 0$. We assume that $F\in \mathcal{L}^\infty (\Omega,C^{\Sigma_{r,1}}(U\times \TT^m, \RR^{2d})$, such that, for each fixed $\theta\in\TT^m$ and $\omega\in\Omega$, $F(\cdot,\theta,\omega)$ is a local diffeomorphism.

Let $K_\ast:\TT^m\times\Omega\to U\subset \RR^{2d}$ be a random map in $\mathcal{L}^\infty (\Omega,C^r(\TT^m,\RR^{2d}))$  such that:
\begin{enumerate}
\item[$(i)$] $K_*$ parameterize an approximate random invariant torus, that is
\begin{align}
\| F\big(K_\ast(\theta,\omega),\theta,\omega\big)-K_\ast(\theta+\alpha,\Phi_1\omega) \|_{\mathcal{L}^\infty (C^r)}<\epsilon.
\end{align}
\item[$(ii)$] For the $C^r$ matrix-valued random map $M_\ast:\TT^m\times\Omega\to\text{GL}(\RR^{2d})$, defined by 
$
M_\ast(\theta,\omega)=D_zF\big(K_\ast(\theta,\omega),\theta,\omega\big),
$ 
 the corresponding transfer operator $\mathscr{M}_\ast$ satisfies the hyperbolic condition:
\begin{align}\label{hyperbolic condition}
\|(\mathscr{M}_\ast-y \text{Id})^{-1}\|\leqslant c_H, 
\end{align}
for each $y\in\mathbb{C}$ with $|y|=1$, where $c_H>0$ is the hyperbolicity bound.
\end{enumerate}
Then, the following results hold:
\begin{enumerate}
\item[$(a)$] (Existence) If $\epsilon>0$ is sufficiently small, there exists a $C^r$ random map $K_\epsilon:\TT^m\times\Omega \to U\subset \RR^{2d}$ such that
\begin{align}\label{invariant eqn-thm1}
F\big(K_\epsilon(\theta,\omega),\theta,\omega\big)=K_\epsilon(\theta+\alpha,\Phi_1\omega),
\end{align}
and $\|K_\epsilon-K_\ast \|_{\mathcal{L}^\infty (C^r)}=\mathcal{O}(\epsilon)$.
\item[$(b)$] (Uniqueness) In a $\mathcal{L}^\infty (C^0)$ neighborhood of $K_\ast$, the solution $K_\epsilon$ above is the unique $\mathcal{L}^\infty (C^0)$ solution of \eqref{invariant eqn-thm1}.

\item[$(c)$] (Persistence) The map $F \to K_\epsilon $ is $\mathcal{L}^\infty (C^1)$ when $F$ is given the $\mathcal{L}^\infty (C^{\Sigma_{r,1}})$ topology and $K_\epsilon$ the $\mathcal{L}^\infty (C_r)$ topology.

\item[$(d)$] (Normal hyperbolicity) The random torus $\mathcal{K}_\epsilon$ parameterized by $K_\epsilon$ is normally hyperbolic.

\end{enumerate}
\end{theorem}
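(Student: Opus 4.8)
The plan is to reformulate the random invariance equation \eqref{invariant eqn-thm1} as the vanishing of a nonlinear operator between Banach spaces and apply the elementary Implicit Function Theorem (IFT). Concretely, define $\mathscr{F}: \mathcal{L}^\infty(\Omega, C^{\Sigma_{r,1}}(U\times\TT^m,\RR^{2d})) \times \mathbf{\Gamma}_{C^r} \to \mathbf{\Gamma}_{C^r}$ by
\begin{equation}
\mathscr{F}(F,K)(\theta,\omega) = F\big(K(\theta,\omega),\theta,\omega\big) - K(\theta+\alpha,\Phi_1\omega),
\end{equation}
so that $\mathscr{F}(F,K)=0$ is exactly the invariance equation. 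Hypothesis $(i)$ says $\|\mathscr{F}(F,K_\ast)\|_{\mathcal{L}^\infty(C^r)}<\epsilon$. The first step is to check that $\mathscr{F}$ is well-defined (the composition of an $\mathcal{L}^\infty(C^{\Sigma_{r,1}})$ map with an $\mathcal{L}^\infty(C^r)$ section lands in $\mathbf{\Gamma}_{C^r}$, using the Fa\`a di Bruno/composition estimates adapted to the $C^\Sigma$-scale as in \cite{CFdelaL2003a,CFdelaL2003b}, together with measurability of $(\theta,\omega)\mapsto F(K(\theta,\omega),\theta,\omega)$ as a composition of measurable maps) and that it is $C^1$ in $K$, with partial derivative
\begin{equation}
D_K\mathscr{F}(F,K_\ast)\,\Delta(\theta,\omega) = D_zF\big(K_\ast(\theta,\omega),\theta,\omega\big)\Delta(\theta,\omega) - \Delta(\theta+\alpha,\Phi_1\omega) = \big(M_\ast(\theta,\omega) - \mathscr{S}\big)\Delta,
\end{equation}
where $\mathscr{S}\Delta(\theta,\omega)=\Delta(\theta+\alpha,\Phi_1\omega)$ is the shift. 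Up to composing with the (invertible) shift, this linear operator is conjugate to $\mathscr{M}_\ast - \mathrm{Id}$ up to reindexing; more precisely $D_K\mathscr{F}(F,K_\ast) = -\mathscr{S}\circ(\mathrm{Id}-\mathscr{M}_\ast)$, so its invertibility on $\mathbf{\Gamma}_{C^r}$ is equivalent to $1\notin\mathrm{Spec}(\mathscr{M}_\ast,\mathbf{\Gamma}_{C^r})$, which is guaranteed by hypothesis $(ii)$ with $\|(D_K\mathscr{F})^{-1}\|\leqslant c_H$. Here Remark \ref{spectrum-remark} is used to pass between the $\mathbf{\Gamma}_b$ and $\mathbf{\Gamma}_{C^r}$ spectra.

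With $D_K\mathscr{F}(F,K_\ast)$ boundedly invertible, parts $(a)$, $(b)$, $(c)$ follow from the standard quantitative IFT (or directly from a contraction-mapping argument on $K \mapsto K_\ast - (D_K\mathscr{F}(F,K_\ast))^{-1}\mathscr{F}(F,K)$ combined with control of the Lipschitz modulus of $D_K\mathscr{F}$ near $K_\ast$): for $\epsilon$ small there is a unique $K_\epsilon$ in a $\mathcal{L}^\infty(C^0)$-neighborhood (the uniqueness being in $\mathcal{L}^\infty(C^0)$ because the hyperbolicity bound transfers to the $C^0$ level by Remark \ref{spectrum-remark}) with $\|K_\epsilon-K_\ast\|_{\mathcal{L}^\infty(C^r)}=\mathcal{O}(\epsilon)$, and $F\mapsto K_\epsilon$ inherits $\mathcal{L}^\infty(C^1)$-dependence from the IFT since $\mathscr{F}$ depends smoothly on its first argument in the $\mathcal{L}^\infty(C^{\Sigma_{r,1}})$ topology. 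Throughout, the key point to verify at each stage is that the operations involved (composition, inversion of $D_K\mathscr{F}$, fixed-point iteration) preserve joint measurability in $\omega$; this is where the choice of the $\mathcal{L}^\infty(C^\Sigma)$ spaces pays off, since they are built to be Banach spaces of (essentially bounded, measurably parameterized) functions closed under exactly these operations, and the Wiener shift $\Phi_1$ is measurable and measure-preserving so that $\mathscr{S}$ is an isometry of $\mathbf{\Gamma}_{C^r}$.

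For part $(d)$, normal hyperbolicity of $\mathcal{K}_\epsilon$, I would argue by perturbation of the transfer operator. Let $M_\epsilon(\theta,\omega)=D_zF(K_\epsilon(\theta,\omega),\theta,\omega)$ and $\mathscr{M}_\epsilon$ the associated transfer operator. Since $\|K_\epsilon-K_\ast\|_{\mathcal{L}^\infty(C^r)}=\mathcal{O}(\epsilon)$ and $D_zF$ is $\mathcal{L}^\infty(C^{\Sigma_{r-1,1}})$ (hence Lipschitz in its first slot in the relevant norms), we get $\|\mathscr{M}_\epsilon-\mathscr{M}_\ast\|=\mathcal{O}(\epsilon)$ as operators on $\mathbf{\Gamma}_{C^0}$ (and on $\mathbf{\Gamma}_{C^r}$). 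The hyperbolicity condition $(ii)$ says the resolvent $(\mathscr{M}_\ast-y\,\mathrm{Id})^{-1}$ is bounded by $c_H$ uniformly for $|y|=1$; by the standard Neumann-series stability of invertibility, for $\epsilon c_H<1$ the resolvent $(\mathscr{M}_\epsilon-y\,\mathrm{Id})^{-1}$ also exists and is uniformly bounded for $|y|=1$, i.e. $\mathrm{Spec}(\mathscr{M}_\epsilon)\cap\{|y|=1\}=\emptyset$. By Remark \ref{hyperbolicity-remark} this spectral gap (applied $\omega$-wise, with the spectral projections onto the parts of the spectrum inside/outside the unit circle giving the measurable splitting $\mathscr{E}^S\oplus\mathscr{E}^U$ and the constants $C(\omega),\lambda_S(\omega),\lambda_U(\omega)$ via Man\'e's characterization \cite{Mane74}) is precisely $\omega$-wise normal hyperbolicity in the sense of Definition \ref{defn:NHRIM}, and invertibility of $(\Lambda_U,\alpha,\Phi)$ follows since $F(\cdot,\theta,\omega)$ is a local diffeomorphism. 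The measurability in $\omega$ of the splitting follows from measurability of $\omega\mapsto M_\epsilon(\cdot,\omega)$ together with the contour-integral (Riesz projection) formula for the spectral projections, which depends continuously—hence measurably—on the operator.

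The main obstacle I anticipate is not the IFT mechanics—those are classical—but the careful bookkeeping of \emph{joint measurability}: one must ensure that every object constructed (the fixed point $K_\epsilon$, the inverse operator $(D_K\mathscr{F})^{-1}$, the spectral projections $\Pi^{S},\Pi^{U}$) is $\mathscr{F}$-measurable in $\omega$, not merely defined $\omega$-wise. The resolution is to carry out the contraction/IFT iteration \emph{inside} the Banach space $\mathbf{\Gamma}_{C^r}$ (so that measurability is automatic, being part of membership in that space) rather than solving $\omega$-by-$\omega$ and then arguing measurability of the solution family after the fact, and to use Riesz-projection integral representations—limits of measurable Riemann sums—for the hyperbolic splitting. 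A secondary technical point is the composition estimate showing $\mathscr{F}$ maps into $\mathbf{\Gamma}_{C^r}$ and is $C^1$; this is the random analogue of the corresponding lemma in the deterministic parameterization method and requires the asymmetric $C^{\Sigma_{r,1}}$ regularity in $F$ (only one derivative in $z$, up to $r$ in $\theta$) to be matched against $C^r$ regularity of $K$.
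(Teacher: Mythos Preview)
Your proposal is correct and follows essentially the same route as the paper: recast the invariance equation as the zero of a $C^1$ operator, identify its derivative with $\mathscr{M}_\ast-\mathrm{Id}$ (up to the shift isometry), invoke the hyperbolicity bound plus Remark~\ref{spectrum-remark} to get invertibility on $\mathbf{\Gamma}_{C^r}$, and then apply the Implicit/Inverse Function Theorem for (a)--(c) and a Neumann-series perturbation of the resolvent for (d). Two small points where the paper is slightly more careful than your write-up: since $F$ is only assumed $\mathcal{L}^\infty(C^{\Sigma_{r,1}})$ (so $D_zF$ need not be Lipschitz in $z$ when $r=0$), the paper controls $\|M_\epsilon-M_\ast\|$ via the \emph{modulus of continuity} of $D_zF$ rather than a Lipschitz bound; and for the claim that $\mathcal{K}_\epsilon$ is a \emph{random set}, the paper verifies measurability of $\omega\mapsto\inf_\theta|z-K_\epsilon(\theta,\omega)|$ directly by passing to a countable dense subset of $\TT^m$ (Castaing--Valadier), which is complementary to your ``work inside $\mathbf{\Gamma}_{C^r}$'' strategy for the fixed point itself.
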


\begin{proof} The key to the proof lies in solving the random invariant equation \eqref{invariant eqn-thm1}. To this end, we define a operator $\mathscr{T}_F: \mathcal{L}^\infty (\Omega,C^r(\TT^m,U))\to \mathcal{L}^\infty (\Omega,C^r(\TT^m,\RR^{2d}))$ by
\begin{align}
\mathscr{T}_F(K)(\theta,\omega)=F\big(K(\theta-\alpha,\Phi_{-1}\omega),\theta-\alpha,\omega\big)-K_\epsilon(\theta,\omega).
\end{align}
It is clear that, for each $\omega\in\Omega$, $\mathscr{T}_F$ is a $C^1$ operator since $F\in\mathcal{L}^\infty(C^{\Sigma_{r,1}})$. Its derivative is given by
\begin{align*}
  D\mathscr{T}_F(K_)\Delta(\theta,\omega)=&D_zF\big(K(\theta-\alpha,\Phi_{-1}\omega),\theta-\alpha,\Phi_{-1}\omega\big)\Delta(\theta-\alpha,\Phi_{-1}\omega)\\
&-\Delta(\theta,\omega),
\end{align*}
which simplifies to  $D\mathscr{T}_F(K)=\mathscr{M}-Id$ with $\mathscr{M}$ being the transfer operator associated to $M(\theta,\omega)=D_zF\big(K(\theta,\omega),\theta,\omega\big)$. The primary task is thus to demonstrate that $D\mathscr{T}_F(K)$ is invertible as a linear operator acting on $C^r$ measurable sections $\Delta$. Upon establishing this, the existence and uniqueness of $K=K_\epsilon$ in $\mathcal{L}^\infty({C^r})$ spaces follows directly from the Inverse Function Theorem, which preserves the measurability.

We begin by asserting that the exact transfer operator $\mathscr{M}_\epsilon$ is hyperbolic and close to the approximate one $\mathscr{M}_\ast$. On the one hand, keeping in mind that $F$ is $\mathcal{L}^\infty(C^{\Sigma_{r,1}})$ and applying the assumption $(i)$, we deduce that 
\begin{align}\label{step1-thm1}
    \|{M}_\epsilon-{M}_\ast\|_{\mathcal{L}^\infty (C^0)}\leqslant \rho\big(\|K_\epsilon-K_\ast \|_{\mathcal{L}^\infty (C^0)}\big)\leqslant \rho(C\epsilon),
\end{align}
where $\rho$ denotes the modulus of continuity of $D_zF$ and $C$ is positive constant. We remark that, if $F$ is $\mathcal{L}^\infty(C^{\Sigma_{r,2}})$, the second inequality in \eqref{step1-thm1} can be replaced by $\|F\|_{\mathcal{L}^\infty(C^{\Sigma_{r,2}})} \|K_\epsilon-K_\ast \|_{\mathcal{L}^\infty (C^0)}$ in accordance with the mean value theorem. On the other hand, based on the assumption $(ii)$, we write
\begin{align}\label{M-invertible}
\mathscr{M}_\epsilon-y \text{Id}=(\mathscr{M}_\ast-y \text{Id})\left[\text{Id}+(\mathscr{M}_\ast-y \text{Id})^{-1}(\mathscr{M}_\epsilon-\mathscr{M}_\ast)\right],
\end{align}
for each $y\in\mathbb{C}$ with $|y|=1$. It is clear that
$$
\left\|(\mathscr{M}_\ast-y \text{Id})^{-1}(\mathscr{M}_\epsilon-\mathscr{M}_\ast)\right\|\leqslant c_H \rho(C\epsilon)<1,
$$
for sufficiently small $\epsilon>0$. By standard Neumann series arguments, we infer that the right-hand side of \eqref{M-invertible} is invertible. Thus, the transfer operator $\mathscr{M}_\epsilon$ is hyperbolic, and $D\mathscr{T}_F(K_\epsilon)$ is invertible. As mentioned in Remark \ref{spectrum-remark}, the spectrum over bounded sections is the same as that over $C^r$ sections. Hence, we have the result with $C^r$ regularity in the conclusion (a). In additional, the uniqueness in the conclusion (b) follows immediately by the Inverse Function Theorem in $C^0$ space. Let $\mathcal{K}_\epsilon$ be the torus parameterized by $K_\epsilon$.  It remains to prove that $\mathcal{K}_\epsilon$ is a random set, that is, we need to verify that, for any $z\in \RR^{2d}$
\begin{equation}
  \label{Thm1:measurable}
\omega\mapsto \inf_{\theta\in  \TT^d} |z-K_\epsilon(\theta,\omega)  |   
\end{equation}
is measurable. This can be done by the following standard procedure, referring to Theorem III.9 in \cite{Castaing1977}; see also Lemma 5.3 in \cite{DuanLS03}. 
Let $A_c$ be a countable dense set of $ \TT^m$. By the continuity of map $K_\epsilon(\cdot,\omega)$, the above infimum is equal to 
\begin{equation}
  \label{Thm1:measurable2}
\inf_{\theta\in A_c}  |z-K_\epsilon(\theta,\omega)  |.
\end{equation}
Since $\omega\mapsto K_\epsilon(\theta,\omega)$ is measurable for all $\theta\in \TT^m$, the mensurability of (\ref{Thm1:measurable2}) follows, proving the conclusion (d).
 
To establish conclusion (c), we further define a operator $\mathscr{T}:\mathcal{L}^\infty(\Omega,C^{\Sigma_{r,1}}(U\times\TT^m,\RR^{2d})) \times \mathcal{L}^\infty(\Omega,C^r(\TT^m,U))\to\mathcal{L}^\infty(\Omega,C^r(\TT^m,\RR^{2d}))$ by
\begin{equation}
  \label{Thm1:C1-operator}
\mathscr{T}(F,K)(\theta,\omega)=F\big(K(\theta-\alpha,\Phi_{-1}\omega),\theta-\alpha,\Phi_{-1}\omega\big)-K(\theta,\omega).
\end{equation}
This operator is $C^1$ for fixed $\omega\in\Omega$. The persistence of the random torus under perturbations and the $\mathcal{L}^\infty (C^1)$ dependence on $F$ follows by applying the Implicit Function Theorem. Thus we are done with the proof of Theorem \ref{Existence-Persistence}.
\end{proof}


Theorem \ref{Existence-Persistence} shows that if the error bound $\epsilon$ in hypothesis (i) is sufficiently small and the hyperbolicity condition in hypothesis (ii) holds, then there exists a nearby random invariant torus, which is also hyperbolic. In applications, particularly in the case of small random noise, the approximate invariant torus is often considered to be deterministic (i.e., $K_\ast(\theta,\omega)=K_0(\theta)$ which is independent of $\omega$). For this scenario, hypothesis (i) can be replaced by the condition:
\begin{equation}
  \label{F-F0}
\|F(z,\theta,\omega)-F_0(z,\theta) \|_{\mathcal{L}^\infty (C^r)}=\mathcal{O}(\epsilon),
\end{equation}
where $F_0$ satisfies the deterministic invariance equation \eqref{eq:deterministic}. The matrix-valued random map $M_\ast$ and its transfer operator $\mathscr{M}_\ast$ in hypothesis (ii) reduce to their deterministic counterparts: $M_0(\theta)=D_zF\big(K_0(\theta),\theta\big)$, $\mathscr{M}_0(\kappa)(\theta)=M_0(\theta-\alpha)\kappa(\theta)$, for $\kappa\in C^r(\TT^d,\RR^n)$. We summarize the corresponding results as a direct corollary of Theorem \ref{Existence-Persistence}.

\begin{corollary}\label{Cor:K0}
Consider the random skew-product $(F,\alpha,\Phi)$ given in Theorem \ref{Existence-Persistence}. Let $(F_0,\alpha)$ be the corresponding deterministic skew-product satisfies \eqref{F-F0}. Assume that we are given a normally hyperbolic (deterministic) invariant torus $\mathcal{K}_0$ parameterized by $K_0$, where $K_0$ is a $C^r(\TT^m,U)$ map solving the invariance equation \eqref{eq:deterministic}. Then, for sufficiently small $\varepsilon$, there is a unique normally hyperbolic random invariant torus $\mathcal{K}_\varepsilon$ being close to $\mathcal{K}_0$ in the sense of \eqref{invariant eqn-thm1}.
\end{corollary}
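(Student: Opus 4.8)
The strategy is simply to check that the data of the corollary fit the hypotheses of Theorem \ref{Existence-Persistence}, taking $K_\ast(\theta,\omega)=K_0(\theta)$ and letting the role of the error bound $\epsilon$ be played by a constant multiple of $\varepsilon$. Since $K_0\in C^r(\TT^m,U)$ does not depend on $\omega$, it automatically belongs to $\mathcal{L}^\infty(\Omega,C^r(\TT^m,\RR^{2d}))$ and takes values in $U$, which is the regularity the theorem asks of the approximate parameterization. For hypothesis $(i)$, I would use the deterministic invariance equation \eqref{eq:deterministic}, $F_0(K_0(\theta),\theta)=K_0(\theta+\alpha)$, to write
\begin{align*}
\big\|F(K_0(\theta),\theta,\omega)-K_0(\theta+\alpha)\big\|_{\mathcal{L}^\infty(C^r)}
&=\big\|F(K_0(\theta),\theta,\omega)-F_0(K_0(\theta),\theta)\big\|_{\mathcal{L}^\infty(C^r)}\\
&\leqslant C_0\,\|F-F_0\|_{\mathcal{L}^\infty(C^r)}=\mathcal{O}(\varepsilon),
\end{align*}
where $C_0$ depends only on $\|K_0\|_{C^r}$ through the chain rule applied to $\theta\mapsto F(K_0(\theta),\theta,\omega)$, and the last estimate is \eqref{F-F0}. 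Thus $(i)$ holds with an error that is made as small as we please by shrinking $\varepsilon$.

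For hypothesis $(ii)$ I would start from the assumed normal hyperbolicity of the deterministic torus $\mathcal{K}_0$. By Remark \ref{hyperbolicity-remark}, read in the deterministic ($\omega$-independent) setting, this is equivalent to the spectrum of the deterministic transfer operator $\mathscr{M}_0$ of $(F_0,\alpha)$ at $K_0$ avoiding the unit circle, i.e.\ $\|(\mathscr{M}_0-y\,\text{Id})^{-1}\|\leqslant c_H$ for all $|y|=1$. Since \eqref{F-F0} controls $F-F_0$, hence $D_zF-D_zF_0$, in the $\mathcal{L}^\infty(C^r)$ topology, one obtains $\|M_\ast-M_0\|_{\mathcal{L}^\infty(C^0)}=\mathcal{O}(\varepsilon)$ uniformly in $\omega$, where $M_\ast(\theta,\omega)=D_zF(K_0(\theta),\theta,\omega)$, and therefore $\|\mathscr{M}_\ast-\mathscr{M}_0\|=\mathcal{O}(\varepsilon)$, the operator norm being taken on $C^r$ sections and independent of that choice by Remark \ref{spectrum-remark}. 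Then, exactly as in the proof of Theorem \ref{Existence-Persistence}, the factorization
\begin{align*}
\mathscr{M}_\ast-y\,\text{Id}=(\mathscr{M}_0-y\,\text{Id})\big[\text{Id}+(\mathscr{M}_0-y\,\text{Id})^{-1}(\mathscr{M}_\ast-\mathscr{M}_0)\big],
\end{align*}
together with $\|(\mathscr{M}_0-y\,\text{Id})^{-1}(\mathscr{M}_\ast-\mathscr{M}_0)\|\leqslant c_H\,\mathcal{O}(\varepsilon)<1$ for $\varepsilon$ small and a Neumann series, gives bounded invertibility of $\mathscr{M}_\ast-y\,\text{Id}$ with, say, $\|(\mathscr{M}_\ast-y\,\text{Id})^{-1}\|\leqslant 2c_H$ for every $|y|=1$. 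Hence $(ii)$ holds with hyperbolicity bound $2c_H$.

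Having verified $(i)$ and $(ii)$, I would invoke Theorem \ref{Existence-Persistence} directly: conclusion $(a)$ yields a $C^r$ random map $K_\varepsilon:\TT^m\times\Omega\to U$ solving the random invariance equation \eqref{invariant eqn-thm1} with $\|K_\varepsilon-K_0\|_{\mathcal{L}^\infty(C^r)}=\mathcal{O}(\varepsilon)$; conclusion $(b)$ states that $K_\varepsilon$ is the unique $\mathcal{L}^\infty(C^0)$ solution of \eqref{invariant eqn-thm1} in an $\mathcal{L}^\infty(C^0)$-neighborhood of $K_0$; and conclusion $(d)$ states that the random torus $\mathcal{K}_\varepsilon$ parameterized by $K_\varepsilon$ is normally hyperbolic (conclusion $(c)$ moreover records the $\mathcal{L}^\infty(C^1)$ dependence of $K_\varepsilon$ on $F$, hence on $\varepsilon$). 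This is exactly the assertion of the corollary, so the proof is complete.

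The only step I expect to need genuine care is the passage from the deterministic hyperbolicity of $\mathscr{M}_0$ to the $\omega$-wise hyperbolicity of the random operator $\mathscr{M}_\ast$: one must make sure the $\mathcal{O}(\varepsilon)$ bound on $\|M_\ast(\cdot,\omega)-M_0\|_{C^0}$ holds uniformly over a.e.\ $\omega$, so that the Neumann series converges and the bound $2c_H$ does not degenerate as $\omega$ varies. This is precisely where the essential-supremum structure of the $\mathcal{L}^\infty(C^r)$ norm and the $\omega$-independence of the spectrum (Remark \ref{spectrum-remark}) are used; everything else is a routine specialization of the proof of Theorem \ref{Existence-Persistence}.
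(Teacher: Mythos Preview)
Your proposal is correct and matches the paper's approach: the paper presents Corollary~\ref{Cor:K0} as a direct consequence of Theorem~\ref{Existence-Persistence} with $K_\ast=K_0$, noting that hypothesis~$(i)$ follows from \eqref{F-F0} and that hypothesis~$(ii)$ reduces to the deterministic hyperbolicity of $\mathscr{M}_0$. Your verification of~$(ii)$ via the Neumann-series perturbation from $\mathscr{M}_0$ to $\mathscr{M}_\ast$ is in fact more careful than the paper's somewhat loose claim that $\mathscr{M}_\ast$ ``reduces to'' $\mathscr{M}_0$, and it correctly handles the residual $\omega$-dependence of $M_\ast(\theta,\omega)=D_zF(K_0(\theta),\theta,\omega)$.
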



It is pointed out that Theorem \ref{Existence-Persistence} permits us to guarantee the existence of $\mathcal{L}^\infty(C^r)$ random invariant torus near the $\mathcal{L}^\infty(C^r)$ approximate one. In fact, we can also deduce the existence of a $\mathcal{L}^\infty(C^r)$ random invariant torus from the existence of a $\mathcal{L}^\infty(C^0)$ approximate invariant torus. Based on Theorem \ref{Existence-Persistence}, we only need to claim that a $\mathcal{L}^\infty(C^0)$ random invariant torus of a $\mathcal{L}^\infty(C^r)$ random quasi-periodic skew-product, is necessarily $\mathcal{L}^\infty(C^r)$.

\begin{theorem}\label{Cor:boostsrap}
Consider the random skew-product $(F,\alpha,\Phi)$ given by \eqref{skew-product} with $F\in \mathcal{L}^\infty (\Omega,C^{r}(U\times \TT^m, \RR^{2d})$ being a map such that $F(\cdot,\theta,\omega)$ is a local diffeomorphism for all $\theta\in\TT^m$ and $\omega\in\Omega$, where $U\subset \RR^{2d}$ is an open set and $r\geqslant 0$. Let $K_\epsilon\in \mathcal{L}^\infty( \Omega,C^0( \TT^m, U))$ be a parameterization of the normally hyperbolic random invariant torus $\mathcal{K}_\epsilon$. Then, the parameterization $K_\epsilon$ is indeed $\mathcal{L}^\infty(C^r)$.
\end{theorem}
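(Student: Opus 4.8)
The plan is to run an a posteriori regularity bootstrap in the spirit of Haro--de la Llave \cite{HdlL06b}: starting from $K_\epsilon\in\mathcal{L}^\infty(C^0)$ I would gain one derivative in the torus variable $\theta$ at a time, using at each stage the invariance equation \eqref{random bundle invariant} together with the hyperbolicity of the transfer operator $\mathscr{M}_\epsilon$ attached to $M_\epsilon(\theta,\omega)=D_zF\big(K_\epsilon(\theta,\omega),\theta,\omega\big)$. Concretely, I would prove the inductive claim: \emph{if $K_\epsilon\in\mathcal{L}^\infty(C^\ell)$ for some integer $0\le\ell<r$, then $K_\epsilon\in\mathcal{L}^\infty(C^{\ell+1})$}; iterating this yields $K_\epsilon\in\mathcal{L}^\infty(C^r)$ (the non-integer case being handled analogously in H\"older spaces).

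The mechanism at each stage is a cohomological equation for the candidate derivative. Differentiating \eqref{random bundle invariant} formally in $\theta$, any derivative $L=D_\theta K_\epsilon$, were it to exist, would satisfy
$$L(\theta+\alpha,\Phi_1\omega)=M_\epsilon(\theta,\omega)L(\theta,\omega)+D_\theta F\big(K_\epsilon(\theta,\omega),\theta,\omega\big),$$
i.e.\ $(\text{Id}-\mathscr{M}_\epsilon)L=\eta$ after the reindexing $\theta\mapsto\theta-\alpha$, $\omega\mapsto\Phi_{-1}\omega$, where $\eta$ is the corresponding forcing built from $D_\theta F\big(K_\epsilon(\cdot,\cdot),\cdot,\cdot\big)$. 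Since $\mathcal{K}_\epsilon$ is normally hyperbolic, $1\notin\text{Spec}(\mathscr{M}_\epsilon)$ (Remark \ref{hyperbolicity-remark}), and since the base is a rotation the spectrum is independent of the space of sections (Remark \ref{spectrum-remark}); hence $\text{Id}-\mathscr{M}_\epsilon$ is boundedly invertible on $\mathcal{L}^\infty(C^\ell)$-sections, with norm bounded by $c_H$. Because $F$ is $\mathcal{L}^\infty(C^r)$ and $K_\epsilon\in\mathcal{L}^\infty(C^\ell)$ with $\ell\le r-1$, both $M_\epsilon$ and $\eta$ lie in $\mathcal{L}^\infty(C^\ell)$, so the equation has a unique solution $L\in\mathcal{L}^\infty(C^\ell)$, given $\omega$-wise by the Neumann series built from the stable/unstable spectral projections and the cocycle $M_\epsilon(\theta,\omega,N)$; this $L$ is measurable in $\omega$ since each ingredient is. Thus, \emph{once it is known that $K_\epsilon$ is $\theta$-differentiable with $D_\theta K_\epsilon=L$}, the inductive claim follows.

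Showing that the formal solution $L$ is the genuine derivative is the heart of the matter, since a priori $K_\epsilon$ is only continuous in $\theta$. For the base step $\ell=0$ I would work with difference quotients $g_h(\theta,\omega)=h^{-1}\big(K_\epsilon(\theta+he,\omega)-K_\epsilon(\theta,\omega)\big)$: subtracting two copies of \eqref{random bundle invariant} and applying the fundamental theorem of calculus along the segment from $K_\epsilon(\theta,\omega)$ to $K_\epsilon(\theta+he,\omega)$ shows that $g_h$ solves $(\text{Id}-\mathscr{A}_h)g_h=\beta_h$, where $\mathscr{A}_h$ is the transfer operator of an averaged matrix $A_h\to M_\epsilon$ and $\beta_h\to\eta$ as $h\to0$. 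By roughness of the exponential dichotomy, $\text{Id}-\mathscr{A}_h$ is uniformly invertible for small $|h|$, so $\{g_h\}$ is bounded in $\mathcal{L}^\infty(C^0)$ and converges to $(\text{Id}-\mathscr{M}_\epsilon)^{-1}\eta=L$; this is exactly $D_\theta K_\epsilon=L\in\mathcal{L}^\infty(C^0)$, and continuity of the partials then gives $K_\epsilon\in\mathcal{L}^\infty(C^1)$. For $\ell\ge1$ one may differentiate \eqref{random bundle invariant} directly and read off $D_\theta K_\epsilon=(\text{Id}-\mathscr{M}_\epsilon)^{-1}\eta\in\mathcal{L}^\infty(C^\ell)$. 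Measurability in $\omega$ is propagated throughout exactly as in the proof of Theorem \ref{Existence-Persistence}: all operations are $\omega$-wise, limits of measurable maps stay measurable, and Castaing's selection theorem handles the random-set statement.

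The step I expect to be the main obstacle is the limit $h\to0$ in the base case: it requires controlling $A_h$ and $\beta_h$ not merely $\omega$-wise but in the $\mathcal{L}^\infty(C^0)$ operator norm, and this rests on two uniformities — a modulus of continuity for $D_zF$ on the (compact) range of $K_\epsilon$ uniform in $\omega$, available from the $\mathcal{L}^\infty(C^r)$ bound with $r\ge1$, and, more delicately, equicontinuity of $\theta\mapsto K_\epsilon(\theta,\omega)$ uniform in $\omega$. The latter I would extract beforehand from the invariance equation by splitting along the stable and unstable bundles and iterating the graph transform $\mathscr{G}$ forward on the stable part and its inverse backward on the unstable part — the same dichotomy estimate that underlies normal hyperbolicity. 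I expect this uniform-equicontinuity lemma, together with its measurable bookkeeping, to absorb most of the work; everything downstream is the standard parameterization-method bootstrap carried out sample-wise.
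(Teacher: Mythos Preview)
Your bootstrap is correct and would close, but it proceeds differently from the paper. Both arguments are $\omega$-wise and rest on the hyperbolicity of $\mathscr{M}_\epsilon$ together with Remark~\ref{spectrum-remark}; the divergence is in how the formal derivative is certified to be the actual one. You run a difference-quotient argument: $g_h$ solves a perturbed cohomological equation $(\text{Id}-\mathscr{A}_h)g_h=\beta_h$, roughness of the dichotomy keeps $\text{Id}-\mathscr{A}_h$ uniformly invertible, and one passes to the limit --- at the price of the $\omega$-uniform equicontinuity lemma you flag. The paper instead follows the Taylor-jet scheme of \cite{HdlL06b,LlaveWayne1995}: solve the formal derivative equations for \emph{all} orders up to $r$ at once, assemble the candidate Taylor polynomial, verify it satisfies the invariance equation with an error proportional to a power of the displacement, and then feed this directly into the quantitative a-posteriori estimate of Theorem~\ref{Existence-Persistence} (Taylor polynomial playing the role of $K_\ast$, the true $K_\epsilon$ the nearby exact solution) to obtain $\|K_\epsilon-\text{Taylor}\|_{\mathcal{L}^\infty(C^0)}\le C|h|^r$; the converse of Taylor's theorem then gives $K_\epsilon\in\mathcal{L}^\infty(C^r)$ in one stroke. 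The payoff of the paper's route is that it reuses Theorem~\ref{Existence-Persistence} as a black box and never needs a priori uniform equicontinuity of $K_\epsilon$; your route is more elementary (no Whitney\slash converse-Taylor machinery, one derivative at a time) but must front-load that extra lemma.
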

\begin{proof}
Notice that such a bootstrap on the regularity (with respect to $\theta$) neither depends on nor alters the measurability. In $\omega$-wise sense, this corollary can be proven using a technique similar to that in the proof of Theorem 3.9 in \cite{HdlL06b}; a related technique also appears in \cite{LlaveWayne1995}. Here, we only outline the main idea: Firstly, we show that the formal equations for the derivatives have unique solutions, which are continuous. Then, under the regularity assumptions for $F$, we demonstrate that the Taylor expansions obtained from these derivatives satisfy the equations, with a smallness condition that is proportional to a power of the displacement. Finally, using the quantitative estimates from Theorem \ref{Existence-Persistence}, we conclude that the $\mathcal{L}^\infty(C^0)$ solution $K_\epsilon$ differs from its Taylor approximation by less than a power, and by the converse of Taylor’s theorem, we deduce that $K_\epsilon$ is indeed $\mathcal{L}^\infty(C^r)$.
\end{proof}

\section{Perturbation analysis of random invariant tori}

In this section, we turn to studying the asymptotic expressions of random invariant tori based on the approach of perturbation theory (see, e.g., \cite{FW2012}). Observe that, in both the SDE \eqref{Cauchy-SDE} and the RDE \eqref{eq: random}, the randomness is controlled by the small parameter $\varepsilon$. The random terms are $\varepsilon\sigma \dot{W}_t$ and $\varepsilon\zeta(\Phi_t \omega)=\varepsilon\int_{-\infty}^0e^{-As}\sigma\Phi_t\omega(s)ds$, respectively. 

Given an initial value $z_0$. To emphasize the dependence on $\varepsilon$ and to apply our previous results, we rewrite the solution of RDE \eqref{eq: random} as $Z(z_0,t,\varepsilon)$, and denote by $F_\varepsilon(z_0,\theta,t,\varepsilon,\omega)$ the corresponding extended random flow. According to Lemma \ref{RDE}, the solution of  the SDE \eqref{Cauchy-SDE} is thus $z(z_0,t,\varepsilon)=T^{-1}(Z(z_0,t,\varepsilon),\Phi_t\omega)$, and the extended random flow satisfies  $\widehat{F}_\varepsilon:=T^{-1}(\cdot,\Phi_t\omega)\circ F_\varepsilon(T(z,\omega),\theta,t,\varepsilon,\omega)$.

\subsection{Expansion formulas for $F_\varepsilon$ and $K_\varepsilon$}\label{sec:perturbation theory}

The main idea of the perturbation theory is, provided that the functions $F_\varepsilon$ are differentiable in some appropriate function spaces (which we will discuss later in this section), we can consider an expansion
\begin{align}
\label{expansion}
K_\varepsilon=K_0+\varepsilon K_1+\cdots+\varepsilon^k K_k+\cdots
\end{align}
of $K_\varepsilon$ in powers of $\varepsilon$. By substituting this expansion with unknown coefficients $K_0,K_1,\cdots,K_k,\cdots$ into the random invariance equation
\begin{equation}\label{random bundle invariant-2}
F_\varepsilon\big(K_\varepsilon(\theta,\omega),\theta,\varepsilon,\omega\big)=K_\varepsilon(\theta+\alpha,\Phi_1\omega),
\end{equation}
we can expand both sides in powers of $\varepsilon$, where $F_\varepsilon(z,\theta,\varepsilon,\omega)=F_\varepsilon(z,\theta,1,\varepsilon,\omega)$ is the time-1 map associated with the solution of RDE \eqref{eq: random}.

To this end, we need to discuss how the left side of \eqref{random bundle invariant-2} is expanded in powers of $\varepsilon$. By regarding $K_\varepsilon=K(\varepsilon)$ as a power series with coefficients given in \eqref{expansion}, we write
\begin{align}\label{F-k}
    F_k=F_k(K_0,K_1,\cdots,K_k,\theta,\omega)=\frac{1}{k!}\frac{d^kF_\varepsilon(K_\varepsilon,\theta,\varepsilon,\omega)}{d\varepsilon^k}\bigg|_{\varepsilon=0}.
\end{align}
In particular, $F_0=F_\varepsilon(K_0,\theta,0)$, for $k=0$. As shown in Proposition \ref{pro-expansion}, we can see that, for $k\geqslant 1$, $F_k$ depends linearly on $K_k$, and the remainder $\mathcal{R}_{k-1}=\mathcal{R}_{k-1}(K_0,\cdots K_{k-1})(\theta,\omega)=F_k-M_0(\theta)K_k$ is independent of $K_k$, where $M_0(\theta)=D_zF_0( K_0,\theta)=D_zF_\varepsilon( K_0,\theta,0,\omega)$ is indeed the deterministic transfer matrix. 

Equating the coefficients of the same powers on left and right of \eqref{random bundle invariant-2}, we obtain equations for the successive calculation of the coefficients $K_1,\cdots,K_k,\cdots$ in \eqref{expansion}:
\begin{align}
\label{expansion-m}
K_0(\theta+\alpha)&=F_0(K_0,\theta),    \notag\\
K_1(\theta+\alpha,\Phi_1\omega)&=F_1(K_0,K_1,\theta,\omega)=M_0(\theta)K_1(\theta,\omega)+D_\varepsilon F_\varepsilon(K_0,\theta,0,\omega), \notag\\
\vdots\;\;\;&\;\;\;\;\;\;\vdots\\
K_k(\theta+\alpha,\Phi_1\omega)&=F_k(K_0,\cdots,K_k,\theta,\omega)\notag\\
&=M_0(\theta)K_k(\theta,\omega)+\mathcal{R}_{k-1}(K_0,\cdots K_{k-1})(\theta,\omega).\notag\\
\vdots\;\;\;&\;\;\;\;\;\;\vdots\notag
\end{align}
Note that these equations determine the functions $K_1,\cdots,K_k,\cdots$ uniquely, if $F_\varepsilon$ is sufficiently smooth. The zeroth approximation is determined from the first equation of the system \eqref{expansion-m}, which coincides with \eqref{eq:deterministic}. If $K_0$ is known, then the second equation in \eqref{expansion-m} is a linear equation in $K_1$. In general, if the functions $K_1,\cdots,K_{k-1}$ are known, then the equation for $K_k$ will be a linear equation.

\begin{pro}\label{pro-expansion}
(i) For $k\geqslant 1$, 
    \begin{align}\label{F-k-2}
    F_k=
    &\frac{1}{k!}\bigg(
    D_zF_\varepsilon(K_0,\theta,0,\omega)\frac{\partial^k K_\varepsilon}{\partial \varepsilon^k }+\sum_{r=1}^{k}D_z^rF_\varepsilon(K_0,\theta,0,\omega)\mathbf{B}_{k,r}\bigg(\frac{\partial K_\varepsilon}{\partial \varepsilon },\frac{\partial^2 K_\varepsilon}{\partial \varepsilon^2 },\cdots,\frac{\partial^{k-1}K_\varepsilon}{\partial \varepsilon^{k-1} }\bigg)\notag\\
    &+D_\varepsilon^kF_\varepsilon(K_0,\theta,0,\omega)
    \bigg) 
    \triangleq 
    D_zF_\varepsilon(K_0,\theta,0,\omega)K_k+\mathcal{R}_{k-1}(K_0,\cdots K_{k-1})(\theta,\omega),
\end{align} 
where $\mathbf{B}_{k,r}$ is the Bell polynomial, that is, 
$$\mathbf{B}_{k,r}\big(f^{(1)},f^{(2)},\cdots,f^{(k-1)}\big)=\sum \frac{k!}{j_1!j_2!\cdots j_{k-1}!}\bigg(\frac{f^{(1)}}{1!}\bigg)^{j_1}\bigg(\frac{f^{(2)}}{2!}\bigg)^{j_2}\cdots \bigg(\frac{f^{(k-1)}}{k-1!}\bigg)^{j_{k-1}},
$$
for any $f\in C^k$, $j_1+j_2+\cdots+j_{k-1}=r \;\&\; j_1+2j_2+\cdots+(k-1)j_{k-1}=k$, and $D_z$, $D_\varepsilon$ stand for the derivatives with respect to the first and third variables respectively. The randomness of $F_\varepsilon$ only appears in the term $D^k_\varepsilon F_\varepsilon$.\\
(ii) Denote by $Z(z_0,t,\varepsilon)$ a solution to the RDE \eqref{eq: random}. Then $\frac{\partial^k Z}{\partial \varepsilon^k}(t,0)=\frac{\partial^k Z}{\partial \varepsilon^k}(t,\varepsilon)\big|_{\varepsilon=0}$ satisfies the following RDE:
\begin{align}\label{F-RDE-k}
    \frac{d}{dt}\frac{\partial^k Z}{\partial \varepsilon^k}\Big|_{\varepsilon=0}=&\big(A +D_z B(Z(t,0),t)\big)\cdot\frac{\partial^k Z}{\partial\varepsilon^k} \Big|_{\varepsilon=0}\\
    &+\sum_{r=1}^kD_z^rB(Z(s,0),s)\mathbf{B}_{k,r}\bigg(\frac{\partial Z}{\partial \varepsilon}+\zeta(\Phi_t\omega,t),\frac{\partial^2 Z}{\partial \varepsilon^2},\cdots,\frac{\partial^{k-1} Z}{\partial \varepsilon^{k-1}}\bigg)\Big|_{\varepsilon=0}.\notag
\end{align}
By introducing $\theta=\alpha t$ to \eqref{F-RDE-k} and taking the initial value $z_0=K_0$, we conclude that $D^k_\varepsilon F_\varepsilon$ in this case is given by the extended random flow of $\frac{\partial^k Z}{\partial \varepsilon^k}(t,0)$. In particularly, 
\begin{equation}\label{Z-epsilon-0}
    \frac{\partial Z}{\partial \varepsilon}(t,0)=e^{\int_0^t(A+D_zB(Z,s))ds}\int_0^te^{-\int_0^s(A+D_zB(Z,\tau))d\tau}\cdot
DB(Z,s)\cdot \zeta(\Phi_s\omega)ds,
\end{equation}
and
\begin{align}\label{R-0}
    \mathcal{R}_0(K_0)(\theta,\omega)=D_\varepsilon F_\varepsilon(K_0,\theta,0,\omega)=\frac{\partial Z}{\partial \varepsilon}(K_0,\theta,1,0,\omega).
\end{align}
\end{pro}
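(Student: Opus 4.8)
\emph{Proof strategy.} The plan is to treat part~(i) as an instance of the multivariate Fa\`a di Bruno formula, and part~(ii) as a standard smooth-dependence-on-parameters argument for the extended RDE \eqref{eq: random}; the two are linked by the fact that $F_\varepsilon(\cdot,\theta,1,\varepsilon,\omega)$ is, by construction, the time-$1$ map of the flow of \eqref{eq: random} (after the substitution $\theta=\alpha t$), so that $D_\varepsilon^kF_\varepsilon$ at time $1$ coincides with $\partial_\varepsilon^kZ$ at time $1$.

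For part~(i), fix $\theta$ and $\omega$ and view $\varepsilon\mapsto F_\varepsilon(K_\varepsilon(\theta,\omega),\theta,\varepsilon,\omega)$ as the composition of $(u,\eta)\mapsto F_\eta(u,\theta,\eta,\omega)$ with the curve $\varepsilon\mapsto(K_\varepsilon(\theta,\omega),\varepsilon)$. Writing $K_\varepsilon=K_0+\sum_{j\geq1}\varepsilon^jK_j$ and Taylor-expanding $F_\varepsilon$ first in its state argument around $K_0$ and then in its explicit $\varepsilon$-slot around $0$, one collects the coefficient of $\varepsilon^k$. The only contribution that involves $K_k$ itself comes from differentiating exactly once in the state direction with no derivative falling on the explicit $\varepsilon$-slot, i.e.\ $D_zF_\varepsilon(K_0,\theta,0,\omega)K_k$; every remaining contribution is a weighted sum over partitions of $k$ of terms $D_z^rF_\varepsilon(K_0,\theta,0,\omega)$ (together with mixed derivatives $D_\varepsilon^sD_z^rF_\varepsilon$ and the pure $D_\varepsilon^kF_\varepsilon$) applied to products $K_{j_1}\cdots K_{j_r}$ in which every $j_i\leq k-1$, and the combinatorial weights are exactly the Bell polynomial $\mathbf B_{k,r}$ evaluated at $\partial_\varepsilon K_\varepsilon,\dots,\partial_\varepsilon^{k-1}K_\varepsilon$. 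This gives the split $F_k=D_zF_\varepsilon(K_0,\theta,0,\omega)K_k+\mathcal R_{k-1}$ of \eqref{F-k-2}, with $\mathcal R_{k-1}$ a universal polynomial expression in $K_0,\dots,K_{k-1}$ and the partial derivatives of $F_\varepsilon$ at $(K_0,\theta,0,\omega)$. Finally, since the noise enters \eqref{eq: random} only through $\varepsilon\zeta(\Phi_t\omega)$ and $\varepsilon(A+I)\zeta(\Phi_t\omega)$, both of which vanish at $\varepsilon=0$, the map $F_\varepsilon(\cdot,\theta,0,\omega)$ and all its $z$-derivatives $D_z^rF_\varepsilon(K_0,\theta,0,\omega)=D_z^rF_0(K_0,\theta)$ are deterministic; in particular $M_0$ is the deterministic transfer matrix of \eqref{eq:deterministic}, and among the coefficients at $\varepsilon=0$ only $D_\varepsilon^kF_\varepsilon(K_0,\theta,0,\omega)$ carries $\omega$-dependence.

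For part~(ii), the cut-off construction of Appendix~A ensures that $B$ has bounded derivatives of every order needed and that the inhomogeneities below are integrable on $[0,1]$ thanks to the sublinear growth of $t\mapsto\zeta(\Phi_t\omega)$ from Lemma~\ref{OU}; hence $\varepsilon\mapsto Z_t(z_0,\omega,\varepsilon)$ is $C^k$ in $\varepsilon$, $\partial_\varepsilon^k$ commutes with the solution operator, and $D_\varepsilon^kF_\varepsilon$ is the extended flow of $\partial_\varepsilon^kZ$. To get the evolution equation for $\partial_\varepsilon^kZ|_{\varepsilon=0}$, differentiate \eqref{eq: random} $k$ times in $\varepsilon$: the chain rule applied to $B\big(Z+\varepsilon\zeta(\Phi_t\omega),t\big)$, whose inner curve $\varepsilon\mapsto Z(t,\varepsilon)+\varepsilon\zeta(\Phi_t\omega)$ has first derivative $\partial_\varepsilon Z+\zeta(\Phi_t\omega)$ and higher derivatives $\partial_\varepsilon^2Z,\partial_\varepsilon^3Z,\dots$, produces the same Bell-polynomial structure as in part~(i); setting $\varepsilon=0$, so that $Z(t,0)$ solves $\dot Z=AZ+B(Z,t)$, and merging the linear term $A\,\partial_\varepsilon^kZ$ with the $r=1$ chain-rule term, yields the linear inhomogeneous RDE \eqref{F-RDE-k}, whose inhomogeneity depends only on $\partial_\varepsilon^jZ$ for $j<k$. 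For $k=1$ this is a first-order linear inhomogeneous RDE with zero initial datum (since $z_0$ is $\varepsilon$-independent), so variation of parameters gives \eqref{Z-epsilon-0}; and \eqref{R-0} is the $k=1$ instance of part~(i), $\mathcal R_0=D_\varepsilon F_\varepsilon(K_0,\theta,0,\omega)$, combined with the identification of $D_\varepsilon F_\varepsilon$ with $\partial_\varepsilon Z$ at time $1$ and initial datum $K_0$.

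The main obstacle is not the algebra but the analytic and measure-theoretic bookkeeping underlying the formal computation: one must justify differentiating the RDE solution arbitrarily many times under the parameter $\varepsilon$ and, at the same time, verify that each $\partial_\varepsilon^kZ(\cdot,\omega)$ depends measurably on $\omega$. Both follow by feeding $\omega\mapsto\zeta(\Phi_\cdot\omega)$ --- measurable and sublinearly growing by Lemma~\ref{OU} --- through the continuous, hence measurability-preserving, solution operators of the successive linear variational equations, using the global bounds from the cut-off system; the remaining step, matching the partition sums to the stated $\mathbf B_{k,r}$ expressions, is routine.
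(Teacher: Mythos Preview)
Your proposal is correct and follows essentially the same route as the paper: part~(i) by Taylor/Fa\`a di Bruno expansion of the composition $\varepsilon\mapsto F_\varepsilon(K_\varepsilon,\theta,\varepsilon,\omega)$, and part~(ii) by differentiating the integral form \eqref{eq: Z} of the RDE in $\varepsilon$, setting $\varepsilon=0$, and solving the resulting linear inhomogeneous equation by variation of parameters (then iterating for higher $k$). The paper's proof is in fact considerably terser than yours --- it says only ``Taylor's theorem and simple calculations'' for (i) and writes out the $k=1$ integral identity for (ii) --- so your added analytic and measurability bookkeeping (cut-off bounds, sublinear growth of $\zeta(\Phi_t\omega)$ from Lemma~\ref{OU}, measurability via continuous solution operators) goes beyond what the authors record, but does not constitute a different strategy.
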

\begin{proof}
    The conclusion (i) follows immediately by Taylor's theorem and simple calculations. To prove the conclusion (ii), we note that the the solution of the RDE \eqref{eq: random} is interpreted as the integral equation \eqref{eq: Z}. By taking derivative with respect to $\varepsilon$, we infer that
\begin{align}
    \frac{\partial Z}{\partial \varepsilon}=\int_0^t A\cdot \frac{\partial Z}{\partial \varepsilon}(s,\varepsilon)+D_zB\big(Z(s,\varepsilon)+\varepsilon\zeta(\Phi_t\omega,s) ,s\big)\cdot \left(\frac{\partial Z}{\partial \varepsilon}(s,\varepsilon)+\zeta(\Phi_t\omega)\right)ds.\notag
\end{align}
Hence, by letting $\varepsilon=0$, we can find that $\frac{\partial Z}{\partial \varepsilon}(t,0)$ satisfies the linear RDE:
\begin{equation}\label{F-RDE}
    \frac{d}{dt}\frac{\partial Z}{\partial \varepsilon}(t,0)= A\cdot \frac{\partial Z}{\partial \varepsilon}(t,0)+D_zB\big(Z(t,0),t\big)\cdot \left(\frac{\partial Z}{\partial \varepsilon}(t,0)+\zeta(\Phi_t\omega)\right),
\end{equation}
which can be explicitly solved as given in \eqref{Z-epsilon-0}. Analogously, we can solve successively the RDEs for $k\geqslant 2$ to obtain $D^k_\varepsilon F_\varepsilon$. 
\end{proof}

\subsection{A perturbation theory for random invariant tori}

Based on the analysis in Section \ref{sec:perturbation theory}, we modify Definition \ref{fun-space} and consider the following function space.
\begin{definition} ($\mathcal{L}^\infty (C^{r,s,l})$-Space) 
     Let $i,j,k\in\NN$. We denote by
     \begin{align}
         &\mathcal{L}^\infty (C^{r,s,l})=\mathcal{L}^\infty ({\Omega},C^{\Xi}(\RR^{2d}\times \TT^m\times \RR^+\times\Omega,\RR^{2d}))  \notag\\ 
         =&\big\{ F_\varepsilon:\RR^{2d}\times \TT^m\times \RR^+\times \Omega \to \RR^{2d}\; | \;F_\varepsilon\in (\mathscr{B}(\RR^{2d})\otimes \mathscr{B}(\TT^m)\otimes \mathscr{F},\mathscr{B}(\RR^{2d})) \text{ for fixed $\varepsilon$,}\notag\\
         &\;\;\;D_\theta^iD_z^jD_\varepsilon^k F_\varepsilon \text{ exists and is continuous and bounded for each $i\leqslant r$, $j\leqslant s$, $k\leqslant l$,}\notag\\
         &\;\;\;\text{and a.e. $\omega\in {\Omega}$
         } \big\}, \notag
     \end{align}
     and define the norm on $\mathcal{L}^\infty (C^{r,s,l})$ by 
     \begin{align}
     \|F\|_{\mathcal{L}^\infty (C^{r,s,l})}=\sup_{i\leqslant r,j\leqslant s,k\leqslant l}|D_\theta^iD_z^jD_\varepsilon^k F_\varepsilon(z,\theta,\varepsilon,\omega)|_\infty. \notag
     \end{align}
\end{definition}
We formulate the following theorem concerning the expansions of random invariant tori in powers of a small parameter $\varepsilon$.

\begin{theorem}\label{Thm2:expansion}
Let $U\subset \RR^{2d}$ be an open set. Let $F_\varepsilon:U\times\TT^m\times\RR\times \Omega \to \RR^{2d}$ be a map of class $\mathcal{L}^\infty (C^{0,l,l})$ with $l\geqslant 1$, such that $(F_\varepsilon,\alpha,\Phi)$ forms a random skew-product (over the rotation $\alpha\in\RR^d$):
$$
\bar{Z}={F}_\varepsilon({Z},\theta,\varepsilon,\omega),\;\;\bar{\theta}=\theta+\alpha,\;\;\bar{\omega}=\Phi_1\omega,
$$
\begin{enumerate}
\item[$(1)$] Assume that we are given a normally hyperbolic deterministic invariant torus $\mathcal{K}_0$ parameterized by $K_0$, where $K_0$ is a $C^0(\TT^m,U)$ map solving the invariance equation \eqref{eq:deterministic}. Then, for sufficiently small $\varepsilon$, there exists a $\mathcal{L}^\infty(C^0)$ random map $K_\varepsilon:\TT^m\times\Omega \to U\subset \RR^{2d}$ such that the random invariance equation \eqref{random bundle invariant-2} holds and 
\begin{align}
\label{expansion-K}
K_\varepsilon (\theta,\omega)=K_0(\theta)+\varepsilon K_1 (\theta,\omega)+\cdots+\varepsilon^{l-1} K_{l-1} (\theta,\omega)+\mathcal{O}(\varepsilon^{l}),
\end{align}
where $K_1,\cdots,K_{l-1}\in\mathcal{L}^\infty(\Omega,C^0(\TT^d,U))$ are determined from the system \eqref{expansion-m}.

\item[$(2)$] If we further assume that $F_\varepsilon\in \mathcal{L}^\infty (C^{r,l,l})$, then $K_\varepsilon \in\mathcal{L}^\infty(C^r)$.

\end{enumerate}
\end{theorem}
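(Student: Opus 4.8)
\emph{Overall strategy.} Rather than re-running an Implicit Function Theorem argument from scratch, the plan is to \emph{construct} the finite Taylor jet $K_\varepsilon^{[l-1]}:=K_0+\varepsilon K_1+\cdots+\varepsilon^{l-1}K_{l-1}$ explicitly from the recursion \eqref{expansion-m}, to show that it solves the random invariance equation \eqref{random bundle invariant-2} up to an error of order $\varepsilon^l$ in $\mathcal{L}^\infty(C^0)$, and then to feed it into Theorem \ref{Existence-Persistence} as the approximate torus $K_\ast$. This produces simultaneously the genuine solution $K_\varepsilon$, the remainder estimate $\mathcal{O}(\varepsilon^l)$ in \eqref{expansion-K}, and — via the uniqueness in part (b) of Theorem \ref{Existence-Persistence} — the identification of $K_\varepsilon$ with the torus furnished by Corollary \ref{Cor:K0}. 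Run in the $\mathcal{L}^\infty(C^r)$ scale, the same scheme will give part (2); alternatively part (2) follows at once from the bootstrap Theorem \ref{Cor:boostsrap}.

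\emph{Solving the recursion.} The order-$\varepsilon^0$ equation in \eqref{expansion-m} is precisely \eqref{eq:deterministic}, solved by the given $K_0$. For $k\geqslant 1$, Proposition \ref{pro-expansion}(i) shows the order-$\varepsilon^k$ equation has the form $K_k(\theta+\alpha,\Phi_1\omega)-M_0(\theta)K_k(\theta,\omega)=\mathcal{R}_{k-1}(K_0,\dots,K_{k-1})(\theta,\omega)$, that is, $(\mathrm{Id}-\mathscr{M}_0)$ applied to $K_k$ (up to the index shift $(\theta,\omega)\mapsto(\theta-\alpha,\Phi_{-1}\omega)$) equals a shifted copy of $\mathcal{R}_{k-1}$, where $M_0$ and hence $\mathscr{M}_0$ are \emph{deterministic} because the $\omega$-dependence of $F_\varepsilon$ enters only through the terms $D_\varepsilon^k F_\varepsilon$ (Proposition \ref{pro-expansion}(i)). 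Normal hyperbolicity of $\mathcal{K}_0$ means $\mathrm{Spec}(\mathscr{M}_0)$ misses the unit circle (Remark \ref{hyperbolicity-remark}); in particular $1\notin\mathrm{Spec}(\mathscr{M}_0)$, so $\mathrm{Id}-\mathscr{M}_0$ is boundedly invertible on $\mathbf{\Gamma}_{C^0}$. Inducting on $k$: each $\mathcal{R}_{k-1}$ is a finite combination, with the Bell-polynomial coefficients of Proposition \ref{pro-expansion}, of the already-constructed $K_0,\dots,K_{k-1}\in\mathcal{L}^\infty(C^0)$ and of the bounded, jointly measurable maps $D_z^rF_\varepsilon(K_0,\cdot,0,\cdot)$ and $D_\varepsilon^kF_\varepsilon(K_0,\cdot,0,\cdot)$, hence $\mathcal{R}_{k-1}\in\mathbf{\Gamma}_{C^0}$ and is measurable in $\omega$; applying $(\mathrm{Id}-\mathscr{M}_0)^{-1}$, whose $\omega$-wise Neumann-series representation preserves measurability exactly as in the proof of Theorem \ref{Existence-Persistence}, yields a unique $K_k\in\mathcal{L}^\infty(C^0)$.

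\emph{Truncation error and the appeal to Theorem \ref{Existence-Persistence}.} Substituting $K_\varepsilon^{[l-1]}$ into the left side of \eqref{random bundle invariant-2} and Taylor-expanding in $\varepsilon$ about $\varepsilon=0$ — legitimate because $F_\varepsilon\in\mathcal{L}^\infty(C^{0,l,l})$, so $\varepsilon\mapsto F_\varepsilon\big(K_\varepsilon^{[l-1]}(\cdot),\cdot,\varepsilon,\cdot\big)$ is $C^l$ with a Lagrange remainder controlled in $\mathcal{L}^\infty(C^0)$ — all terms of order $\leqslant\varepsilon^{l-1}$ cancel against the right side, since that cancellation is exactly the system \eqref{expansion-m}. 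The surviving remainder is $\mathcal{O}(\varepsilon^l)$ in $\mathcal{L}^\infty(C^0)$, with constant depending only on $\|F_\varepsilon\|_{\mathcal{L}^\infty(C^{0,l,l})}$ and $\|K_\varepsilon^{[l-1]}\|_{\mathcal{L}^\infty(C^0)}$, so $K_\ast:=K_\varepsilon^{[l-1]}$ satisfies hypothesis (i) of Theorem \ref{Existence-Persistence} with $\epsilon=\mathcal{O}(\varepsilon^l)$. For hypothesis (ii), $M_\ast=D_zF_\varepsilon(K_\varepsilon^{[l-1]},\cdot,\varepsilon,\cdot)$ differs from $M_0$ by $\mathcal{O}(\varepsilon)$ in $\mathcal{L}^\infty(C^0)$ (using $l\geqslant1$ and $K_\varepsilon^{[l-1]}-K_0=\mathcal{O}(\varepsilon)$), so its transfer operator is a small perturbation of the hyperbolic $\mathscr{M}_0$ and stays hyperbolic for $\varepsilon$ small, by the Neumann-series computation already performed in \eqref{M-invertible}. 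Theorem \ref{Existence-Persistence}(a)–(b) then delivers a unique $\mathcal{L}^\infty(C^0)$ solution $K_\varepsilon$ of \eqref{random bundle invariant-2} with $\|K_\varepsilon-K_\varepsilon^{[l-1]}\|_{\mathcal{L}^\infty(C^0)}=\mathcal{O}(\varepsilon^l)$, which is \eqref{expansion-K}; this proves part (1).

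\emph{Part (2) and the main obstacle.} If $F_\varepsilon\in\mathcal{L}^\infty(C^{r,l,l})$, rerun the recursion in $\mathbf{\Gamma}_{C^r}$: each $\mathcal{R}_{k-1}$ is then $C^r$ in $\theta$, and $\mathrm{Id}-\mathscr{M}_0$ remains invertible on $\mathbf{\Gamma}_{C^r}$ because the spectrum of $\mathscr{M}_0$ is independent of the space it acts on (Remark \ref{spectrum-remark}), so $K_k\in\mathcal{L}^\infty(C^r)$, whence $K_\varepsilon^{[l-1]}\in\mathcal{L}^\infty(C^r)$ and Theorem \ref{Existence-Persistence}(a) gives $K_\varepsilon\in\mathcal{L}^\infty(C^r)$; equivalently, for the fixed small $\varepsilon$ the triple $(F_\varepsilon,\alpha,\Phi)$ is a $\mathcal{L}^\infty(C^r)$ random skew-product whose fibre maps are local diffeomorphisms (inherited from $F_0$ by $\mathcal{O}(\varepsilon)$-closeness and invertibility of $M_0$), so Theorem \ref{Cor:boostsrap} applies directly to $K_\varepsilon$. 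The step I expect to be the real work is the truncation-error estimate: one must track the Fa\`a di Bruno / Bell-polynomial combinatorics of Proposition \ref{pro-expansion} to confirm that the order-$\leqslant\varepsilon^{l-1}$ terms cancel verbatim, that every intermediate object (the $\varepsilon$-derivatives of the RDE flow from Proposition \ref{pro-expansion}(ii), the remainders $\mathcal{R}_k$, the transfer matrices) stays bounded and jointly measurable in $(\theta,\omega)$, and that the substitution operator genuinely transports the $\mathcal{L}^\infty(C^{0,l,l})$ regularity of $F_\varepsilon$ into a $C^l$-in-$\varepsilon$ map on $\mathcal{L}^\infty(C^0)$ with quantitatively controlled remainder; by contrast the normal-hyperbolicity input (bounded invertibility of $\mathrm{Id}-\mathscr{M}_0$) is immediate from Remarks \ref{spectrum-remark} and \ref{hyperbolicity-remark}.
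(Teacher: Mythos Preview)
Your proposal is correct and follows essentially the same approach as the paper: solve the recursion \eqref{expansion-m} order by order by inverting the deterministic operator $\mathscr{M}_0-\mathrm{Id}$ (available by normal hyperbolicity, Remarks \ref{spectrum-remark}--\ref{hyperbolicity-remark}), then use the resulting jet as an approximate torus and invoke Theorem \ref{Existence-Persistence} for the $\mathcal{O}(\varepsilon^l)$ remainder, with part (2) handled by Theorem \ref{Cor:boostsrap}. The paper frames the argument as an induction on $l$ with base case Corollary \ref{Cor:K0}, whereas you build the full jet $K_\varepsilon^{[l-1]}$ upfront and apply Theorem \ref{Existence-Persistence} once at the end, but the ingredients and logic are identical.
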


\begin{proof} 
Note that, for fixed $\varepsilon$, the map $F_\varepsilon$ is reduced to the one ($F$) in Section \ref{sec:3}. Once we have proven the conclusion (1), the conclusion (2) follows immediately by Theorem \ref{Cor:boostsrap}. Hence we only prove the conclusion (1) here. Keeping the equations \eqref{expansion-m} in mind, the main idea of our proof is to apply the method of induction in the order of expansion. 

For $l=1$, the results follow directly by Corollary \ref{Cor:K0} with $r=0$. Later in Remark \ref{another-proof}, we will sketch an alternative proof which is based on a fixed point argument and attributes of the function $F_\varepsilon$ (instead of that of $F$).

Assume that the $(l-1)$th-order expansion of $K_\varepsilon$ exists, that is, $K_0,K_1,\cdots K_{l-1}$ are known. Formally expanding $F$ and $K_\varepsilon$ up to order $l$, we can see that the random invariance equation \eqref{random bundle invariant-2} at order $\varepsilon^{l}$ becomes
\begin{align}\label{expansion-linear}
M_0(\theta)K_l(\theta,\omega)-K_l(\theta+\alpha,\Phi_1\omega)=-\mathcal{R}_{l-1}(K_0,\cdots K_{l-1})(\theta,\omega),
\end{align}
where $M_0(\theta)=D_zF_\varepsilon( K_0(\theta),\theta)$ is the transfer matrix and $\mathcal{R}_{l-1}(\theta,\omega)$ is a known function given by \eqref{F-k}. Note that \eqref{expansion-linear} is a linear equation with deterministic coefficients, and the only random terms are in the right-hand side (and, of course, the unknowns).

A crucial point is the hypothesis of hyperbolicity and it means that the left-hand side of \eqref{expansion-linear} forms an invertible operator from $\mathcal{L}^\infty( \Omega,C^0( \TT^m, \RR^{2d}))$ to itself:
$$
(\mathscr{M}_0- \text{Id})[K_l](\theta+\alpha,\Phi_1\omega)=M_0(\theta)K_l(\theta,\omega)-K_l(\theta+\alpha,\Phi_1\omega),
$$
where $\mathscr{M}_0$ is the transfer operator associated to $M_0$. Therefore, by the Inverse Function Theorem, we conclude that there is a unique solution 
$$
K_l(\theta,\omega)=(\mathscr{M}_0- \text{Id})^{-1}[\mathcal{R}_{l-1}](\theta-\alpha,\Phi_{-1}\omega)
$$
of equation \eqref{expansion-linear} in $\mathcal{L}^\infty( \Omega,C^0( \TT^m, \RR^{2d}))$. Note that the random invariance equation \eqref{random bundle invariant-2} holds with $F_\varepsilon\in \mathcal{L}^\infty (C^{\Xi_{0,l,l}})$. There exists a positive constant $C$ such that 
$$
\Big\|K_\varepsilon-\sum_{k=0}^{l}\varepsilon^k K_k\Big\|_{\mathcal{L}^\infty(C^0)}\leqslant \Big\|F_\varepsilon-\sum_{k=0}^{l}\varepsilon^k F_k\Big\|_{\mathcal{L}^\infty(C^{\Xi_{0,l,l}})}\leqslant C\varepsilon^l.
$$
By induction, we are done with the proof.
\end{proof}

\begin{remark}\label{another-proof}
    Note that, comparing with the general setups for $F$ in Section \ref{sec:3}, the randomness in $F_\varepsilon$ is controlled by the small parameter $\varepsilon$ and the random terms can be calculated in this case. It is thus possible to discuss about the existence of the random invariant torus in a more specific way (e.g., estimating the value range of $\varepsilon$, etc). We now present a direct proof for the case $l=1$ by the following lines.

 For $l=1$, we are indeed looking for a solution $K_\varepsilon(\omega)$ of the random invariance equation \eqref{random bundle invariant-2} of the form 
$$
K_\varepsilon(\theta,\omega)=K_0(\theta)+\Delta_\varepsilon(\theta,\omega).
$$
The new unknown is $\Delta_\varepsilon \in \mathcal{L}^\infty(C^0)$, which should satisfy $\|\Delta_\varepsilon\|_{\mathcal{L}^\infty(C^0)}\leqslant C\varepsilon$ for some positive constant $C$. Let us define a closed set
$$
\mathscr{B}_{{r}}=\{\Delta\in L^\infty(\Omega,C^0(\TT^d,U))\;|\; \|\Delta\|\leqslant {r}\},
$$
which forms a complete metric space with respect to the restricted  $\mathcal{L}^\infty(C^0)$-norm. What we need to do is thus find the new unknown $\Delta_\varepsilon\in\mathscr{B}_{r}$ with a suitable $r$ that we will choose later. 
By Taylor’s theorem, the random invariance equation \eqref{random bundle invariant-2} is equivalent to 
 \begin{align}
 \label{M-random-N}
M_0(\theta-\alpha)\Delta_\varepsilon(\theta-\alpha,\Phi_{-1}\omega)-\Delta_\varepsilon(\theta,\omega)=\mathscr{N}(\Delta_\varepsilon)(\theta,\omega),
 \end{align}
 where $\mathscr{N}:\mathscr{B}_{r}\to \mathcal{L}^\infty(\Omega,C^0(\TT^m,\RR^{2d}))$ is the operator defined by the identity
 \begin{align}
\mathscr{N}(\Delta_\varepsilon)(\theta+\alpha,\Phi_1\omega) =&K_0(\theta+\alpha)-F_0(K_0,\theta)-\varepsilon \int_0^1 D_\varepsilon F_\varepsilon(K_0+\alpha\Delta_\varepsilon,\theta,\alpha\varepsilon,\omega) d\alpha \notag\\
&-\Delta_\varepsilon\int_0^1 \big[D_z F_\varepsilon(K_0+\alpha\Delta_\varepsilon,\theta,\alpha\varepsilon,\omega)-D_z F_\varepsilon(K_0,\theta,0,\omega)\big]d\alpha.  \notag
 \end{align} 
Under the assumption of hyperbolicity, the left-hand side of equation \eqref{M-random-N} is clearly an invertible operator. We thus can rewrite \eqref{M-random-N} as a fixed point equation for $\Delta_\varepsilon$:
 \begin{align}
\Delta_\varepsilon=(\mathscr{M}_0-Id)^{-1}\circ \mathscr{N}(\Delta_\varepsilon).
 \end{align} 
Denote by $\mathscr{G}=(\mathscr{M}_0-Id)^{-1}\circ \mathscr{N}:\mathscr{B}_{r}\to \mathcal{L}^\infty(\Omega,C(\TT^m,\RR^{2d}))$ the corresponding fixed point operator. We now proceed to show that $\mathscr{G}$ takes values in $\mathscr{B}_{r}$ for a suitable ${r}$, and that it is contractive.
\par
For simplicity, we assume that $D_zF_\varepsilon$ and $D_\varepsilon F_\varepsilon$ are Lipschitz with respect to the first and the third variables with a common Lipschitz constant $L$  (This makes sense since our model can be considered as a cut-off system). For $\Delta_\varepsilon\in \mathscr{B}_r$ we can find that
\begin{align}
    \|\mathscr{N}(\Delta_\varepsilon)\|
    \leqslant &\varepsilon|D_\varepsilon F_\varepsilon (K_0,\theta,0,\omega)|+(\varepsilon+\Delta_\varepsilon)\int_0^1 \alpha L(\Delta_\varepsilon+\varepsilon)d\alpha\notag\\
    \leqslant & \varepsilon C_0+\frac{1}{2}L(r+\varepsilon)^2.
\end{align}
Hence, with $A=\frac{1}{2}c_HL$, $B= c_HL\varepsilon $ and $C=c_H(\frac{1}{2}L\varepsilon^2+\varepsilon C_0)$, we have
 \begin{align}
 \|\mathscr{G}(\Delta_\varepsilon)\|\leqslant 
 &A r^2+Br+C.\notag
 \end{align}
In consequence, the operator $\mathscr{G}$ is well-defined in $\mathscr{B}_{r}$ as long as $(B-1)^2-4AC> 0$, that is $ 0<\varepsilon<{2L(c_H+C_0c_H^2)}^{-1},$ and $r\in[r^-,r^+]$ with 
 \begin{align}
r^\pm=\frac{1}{c_HL}-\varepsilon  \pm\frac{\sqrt{1-2\varepsilon L(c_H+C_0c_H^2)}}{c_HL}.\notag
 \end{align}
 It is not hard to check that the operator $\mathscr{G}$ is contracting in $\mathscr{B}_{r^\ast}$ for a certain $r^\ast\in (r^-\vee 0,\frac{1}{c_HL}-\varepsilon)$, and its Lipschitz constant is $2Ar^\ast+B=c_H L(r^\ast+\varepsilon)<1$. The results thus follows by the fixed point argument which preserves measurability.
\end{remark}

\subsection{Numerical algorithms for computing random invariant tori}

In this section, we discuss about the algorithm to solving the invariant equation for finding invariant tori. To this end, we should take full advantage of the standing property of the known object, that is, the hyperbolicity of the deterministic invariant torus. It leads us to think of the stable and unstable bundles, and construct adapted frames in which it is easier to measure the hyperbolicity. For simplicity,  we will consider that the invariant bundles are trivial (or easily trivializable), and so that we can construct global frames.

\subsubsection{Computing $K_k$}

Referring to \cite{Haro}, all we need is to assume that there exists a continuous matrix-valued map $P_0:\TT^d\to GL(\RR^n)$ such that
\begin{align}\label{reducibility}
P_0^{-1}(\theta+\alpha)M_0(\theta)P_0(\theta)=\Lambda_0(\theta),
 \end{align}
 where $\Lambda_0=\Lambda_0^S\times\Lambda_0^U:\;\TT^d\to L(\RR^{n_S})\times GL(\RR^{n_U})$ is a continuous matrix-valued map defining a block-diagonal linear skew-product $(\Lambda_0,\theta)$ in $ (\RR^{n_S} \times \RR^{n_U}) \times \TT^d$. In this way, the linear skew-product $(M_0,\theta)$ is hyperbolic, the matrix-valued map $P_0$ is regarded as an adapted frame, and \eqref{reducibility} is called a \textit{reducibility equation}. By supplementing the random invariance equation \eqref{random bundle invariant-2} with the reducibility equation \eqref{reducibility}, at each step the linear equation to be solved is, in some sense, diagonalized. We remark that such a reducibility technique has been wide used in many fields; see \cite{HdlL06a,HdlL06b,Haro,ZhangL18}. 

\begin{remark}
    We emphasize that both $P_0$ and $\Lambda_0$ are deterministic objects, and they are usually known by considering the deterministic system. In particularly, for the deterministic counterpart of the model \eqref{eq:model}, these two objects can be calculated by a Newton’s method \cite{ZhangL18}. 
\end{remark}

Keeping the equations \eqref{expansion-m} in mind, our goal is to compute $K_k$ $(1\leqslant k \leqslant l-1)$, assuming that $K_0$ is known and $K_i$ ($i=1,\cdots,k-1$)
have already been computed in previous steps. That is, we would like to produce a better approximation of the random invariant torus at each step of the algorithm, increasing the order by 1.

We first focus on the first order case, that is, 
\begin{align}\label{K-1}
M_0(\theta)K_1(\theta,\omega)-K_1(\theta+\alpha,\Phi_1\omega)=-\mathcal{R}_{0}(K_0)(\theta,\omega),
\end{align}
where $\mathcal{R}_{0}$ is given by \eqref{R-0}. By Theorem \ref{Thm2:expansion}, $K_1$ can be found by inverting the operator $\mathscr{M}_0- \text{Id}$. With the help of \eqref{reducibility}, we can obtain an explicit expression for $K_1$ via the following procedure. Under the adapted frame $P_0$, we define by
$$
\widetilde{K}_1(\theta,\omega)=P_0^{-1}(\theta)K_1(\theta,\omega)
$$
the first order correction, so that the new approximation is $K_0+\varepsilon K_1=K_0+\varepsilon P_0\widetilde{K}_1$. By multiplying $P_0^{-1}(\theta+\alpha)$ on the both sides of \eqref{K-1}, we have
 \begin{align}\label{K-1-2}
P_0^{-1}(\theta+\alpha)M_0(\theta)P_0(\theta)\widetilde{K}_1(\theta,\omega)-\widetilde{K}_1(\theta+\alpha,\Phi_1\omega)=-\widetilde{\mathcal{R}}_{0}(\theta,\omega),
\end{align}
with $\widetilde{\mathcal{R}}_{0}=P_0^{-1}(\theta+\alpha)\mathcal{R}_{0}$ being a known function (i.e., independent of $K_1$). Taking the reducibility equation \eqref{reducibility} in account, we infer that 
 \begin{align}\label{K-1-3}
\Lambda_0(\theta)\widetilde{K}_1(\theta,\omega)-\widetilde{K}_1(\theta+\alpha,\Phi_1\omega)=-\widetilde{\mathcal{R}}_{0}(\theta,\omega).
\end{align}
Therefore, we only need to find $\widetilde{K}_1$ given $\widetilde{\mathcal{R}}_{0}$. Moreover, by splitting to the stable and unstable direction, equation \eqref{K-1-3} can be rewritten as
\begin{equation}\label{WSU-1}
\begin{split}
\left \{
  \begin{array}{ll}
\widetilde{K}_1^S(\theta,\omega)&=\Lambda_0^S(\theta-\alpha)\widetilde{K}_1^S(\theta-\alpha,\Phi_{-1}\omega)+\widetilde{\mathcal{R}}_{0}^S(\theta-\alpha,\Phi_{-1}\omega), \\
\widetilde{K}_1^U(\theta,\omega)&=\Lambda_0^U(\theta)^{-1}\big[\widetilde{K}_1^U(\theta+\alpha,\Phi_1\omega)-\widetilde{\mathcal{R}}_{0}^U(\theta,\omega)\big].
\end{array}
   \right.
  \end{split}
\end{equation}
It is clear that the above two equations are contractions which enable us to write $\widetilde{K}_1$ in terms of converging infinite
series.

Denoting $T_{\alpha,\Phi_1}^{j}=(\theta+j\alpha,\Phi_j\omega)$ for $j\in \mathbb{Z}$, the equations \eqref{WSU-1} can be solved explicitly:
\begin{equation}
\begin{split}
\left \{
  \begin{array}{ll}
\widetilde{K}_1^S&=\sum_{N=0}^{\infty}\left[\prod_{j=1}^{N}\Lambda_0^{S}\circ T_{\alpha,\Phi_1}^{-j}\right]\widetilde{\mathcal{R}}_{0}^{S}\circ T_{\alpha,\Phi_1}^{-(N+1)}, \\
\widetilde{K}_1^U&=-\sum_{N=0}^{\infty}\left(\Lambda_0^{U}\right)^{-1}\left[\prod_{j=0}^{N}\Lambda_0^{U}\circ T_{\alpha,\Phi_1}^{j}\right]\widetilde{\mathcal{R}}_{0}^{U}\circ T_{\alpha,\Phi_1}^{N}.  
\end{array}
   \right.
  \end{split}
\end{equation}
We thus have $K_1^S=P_0\widetilde{K}_1^S$ and $K_1^U=P_0\widetilde{K}_1^U$. Noticing that both $\Lambda_0$ and $P_0$ are deterministic,  $K_1$ has the same distribution with $\mathcal{R}_0$ for fixed $\theta$. Furthermore, by RDE \eqref{F-RDE}, $K_1$ is indeed Gaussian. 

More interesting phenomenon occurs when we consider higher order terms. Keeping \eqref{expansion-m} in mind, higher order terms of $K$ should be readily
obtainable as well, since all the right hand side terms of the $k$-th order
equation only depends on lower order terms. We can get exactly the same formulae for $K_k$ ($k\geqslant 2$) as $K_1$ except for a different $\mathcal{R}_k$ term. It should be noted that the formulae for $\mathcal{R}_k$ involve stochastic processes given in RDEs \eqref{F-RDE-k}. The distributions of $K_k$ ($k\geqslant 2$) corresponds to a polynomial of Gaussian distribution. 

\subsubsection{Computing Lyapunov exponents}

It is natural to expand the definition of reducibility for random manifold. We say that the invariant torus $\mathcal{K}$ is reducible if the linear random skew-product $(M,\alpha,\Phi)$ 
is \textit{reducible} to a random matrix $$\Lambda(\theta,\omega)=\text{blockdiag}\big[\Lambda^S(\theta,\omega),\Lambda^U(\theta,\omega)\big].$$ That is, there is change of variables $P:\TT^m\times \Omega\to GL(\RR^{2d})$, such that the \textit{random reducibility equation}
 \begin{align}\label{random reducibility}
P(\theta+\alpha,\Phi_1\omega)^{-1}M(\theta,\omega) P(\theta,\omega)=\Lambda(\theta,\omega) 
\end{align}
is satisfied, where $M(\theta,\omega)=D_z F_\varepsilon(K_\varepsilon,\theta,\varepsilon,\omega)$. We thus define by $(\Lambda,\alpha,\Phi)$ a block-diagonal linear skew-product in $ (\RR^{{2d}_S} \times \RR^{{2d}_U}) \times \TT^m\times\Omega$, with the matrix-valued map $\Lambda=\Lambda^S\times\Lambda^U:\;\TT^m\times\Omega\to L(\RR^{{2d}_S})\times GL(\RR^{{2d}_U})$.

Again, following the approach
of perturbation theory, we consider the expansions: 
\begin{align}
  P&=P_0+\varepsilon P_1+\cdots+\varepsilon^k P_k+\cdots,\notag\\ 
  \Lambda&=\Lambda_0+\varepsilon\Lambda_1+\cdots+\varepsilon^k\Lambda_k+\cdots.\notag 
\end{align}
Clearly, the first order terms $P_0$ and $\Lambda_0$ are known, and our aim is to find higher order terms. To obtain the results of first order, we consider the new approximations
$$
P_0+\varepsilon P_1=P_0+\varepsilon P_0Q_1\;\;\text{and}\;\;\Lambda_0+\varepsilon\Lambda_1.
$$
Note that, by eliminating quadratically small terms, we can calculate that 
\begin{align}
0=&(P_0+\varepsilon P_1)^{-1}(\theta+\alpha,\Phi_1\omega)D_z F_\varepsilon\big(K_0+\varepsilon K_1,\theta,\varepsilon,\omega\big) (P_0+\varepsilon P_1)(\theta,\omega)-(\Lambda_0+\varepsilon\Lambda_1)(\theta,\omega) \notag\\
=&\big[(\text{Id}- \varepsilon Q_1)P_0^{-1}\big](\theta+\alpha,\Phi_1\omega)D_z F_\varepsilon\big(K_0+\varepsilon K_1,\theta,\varepsilon,\omega\big) \big[P_0(\text{Id}+ \varepsilon Q_1)\big](\theta,\omega)\notag\\
&-\big[\Lambda_0(\theta)+\varepsilon \Lambda_1(\theta,\omega)\big] \notag\\
=&P_{0}^{-1}(\theta+\alpha)D_z F_\varepsilon\big(K_0,\theta,0,\omega\big)P_{0}(\theta)-\Lambda_{0}(\theta)-\varepsilon \Lambda_1(\theta,\omega)+\varepsilon \mathcal{E}(K_0,K_1)(\theta,\omega)\notag\\
&- \varepsilon Q_1(\theta+\alpha,\Phi_1\omega)P_{0}^{-1}(\theta+\alpha)D_z F_\varepsilon\big(K_0,\theta,0,\omega\big)P_{0}(\theta) \notag\\
&+\varepsilon P_{0}^{-1}(\theta+\alpha)D_z F_\varepsilon\big(K_0,\theta,0,\omega\big)P_{0}(\theta) Q_1(\theta,\omega)\notag
\end{align}
with $\mathcal{E}=D_z^2F_\varepsilon(K_0,\theta,0,\omega)K_1(\theta,\omega)+D_\varepsilon D_zF_\varepsilon(K_0,\theta,0,\omega)$.
Hence, by \eqref{reducibility}, we turn to dealing with the following equation
 \begin{align}\label{reducibility-redefined}
\Lambda_0(\theta) Q_1(\theta,\omega)- Q_1(\theta+\alpha,\Phi_1\omega)\Lambda_0(\theta)- \Lambda_1 (\theta,\omega)=-\mathcal{E}(K_0,K_1)(\theta,\omega).
 \end{align}
We adopt the block matrix notation
$$
Q=\begin{pmatrix} Q^{SS}& Q^{SU} \\ Q^{US} & Q^{UU}\end{pmatrix}.
$$
Then, by writing \eqref{reducibility-redefined} in block form, we have
\begin{align}
\Lambda_0^SQ_1^{SS}-(Q_1^{SS}\circ T_{\alpha,\phi_1})\Lambda_0^{S}&=-\mathcal{E}^{SS}+\Lambda_1^S, \label{A1}\\
\Lambda_0^SQ_1^{SU}-(Q_1^{SU}\circ T_{\alpha,\phi_1})\Lambda_0^{U}&=-\mathcal{E}^{SU}, \label{A2}\\
\Lambda_0^UQ_1^{US}-(Q_k^{US}\circ T_{\alpha,\phi_1})\Lambda_0^{S}&=-\mathcal{E}^{US}, \label{A3}\\
\Lambda_0^UQ_1^{UU}-(Q_k^{UU}\circ T_{\alpha,\phi_1})\Lambda_0^{U}&=-\mathcal{E}^{UU}+\Lambda_1^U. \label{A4}
\end{align}
It is clear that equations \eqref{A2} and \eqref{A3} can be solved by using the contraction principle:
\begin{align}
Q_1^{SU}&=\big[ (\Lambda_0^{S}\circ T_{\alpha,\phi_1}^{-1})(Q_k^{SU}\circ T_{\alpha,\phi_1}^{-1})+(\mathcal{E}^{SU}\circ T_{\alpha,\phi_1}^{-1})\big]\big( \Lambda_0^{U}\circ T_{\alpha,\phi_1}^{-1}\big)^{-1}, \\
Q_1^{US}&=\big(\Lambda_0^U\big)^{-1}\big[(Q_k^{US}\circ T_{\alpha,\phi_1})\Lambda_0^{S}-\mathcal{E}^{US}\big].
\end{align}
As for equations  \eqref{A1} and \eqref{A4}, we note that, for each $\omega\in\Omega$, the solutions to them are not unique. For convenience, we choose $Q_1^{SS}=0$ and $Q_1^{UU}=0$. 
The geometrical meaning of such a choice of $Q_k$ is that, for each realization of the noise, we can modify the stable and unstable frames in the complementary directions; See \cite{Haro,ZhangL18}. In this way, we conclude that, by taking $P_1=P_0 Q_1$, the first-order expansion for $\Lambda$ satisfies
\begin{align}
\Lambda_1^S=\mathcal{E}^{SS} \;\;\text{and}\;\; \Lambda_1^U=\mathcal{E}^{UU}.
\end{align}

\section*{Acknowledgments}
The authors thank Prof. Rafael de la Llave, Prof. Jinqiao Duan, Dr. Hongyu Cheng and Dr. Ying Chao for helpful discussions. PW would especially like to express his gratitude to Prof. Rafael de la Llave for hospitality during his visit to Georgia Institute of Technology.

\bibliographystyle{alpha}
\bibliography{ref-tst}

\section*{Appendix}

\addcontentsline{toc}{section}{Appendix}

\subsection*{A. The cut-off technique.}\label{cut-off}



For technical reasons as mentioned in Section 2.1, we may need to introduce the following approximate SDE (i.e., a cut-off system):
 \begin{equation}   
  \label{eq:model-cut}
  \begin{split}
  \left \{
  \begin{array}{ll}
   d{x}_t^n&=v_t^ndt,\\
   d{v}_t^n&=-\gamma \chi_n(v_t^n)dt-  \nabla U_n (x_t^n-E(t))dt+\varepsilon dW_t,
  \end{array}
   \right.
    \end{split}
  \end{equation}
with initial values $x_0^n=x_0$ and $v_0^n=v_0$, where for $n>0$, $\chi_n$ is a smooth function satisfying
$$
\chi_n(x)=\left \{
  \begin{array}{ll}
   1,&\;\;|x|\leqslant n; \\
   0,&\;\;|x|>2n,
  \end{array}
   \right.
$$
and $U_n(x)=U(x)\chi_n(x)$. Let $D_n=\{ |z|<n\}\subset \RR^{2d}$ be a bounded domain with $n>R>0$. We define
 \begin{equation}   
  \label{stopping time}
\tau_{n,R}=\{t\geqslant0: \sup_{|z_0|\leqslant R} |z_t(z_0)| \geqslant n \}
  \end{equation}
as the first exit time for the process $z_t$ of \eqref{eq:model} from $D_n$. It is clear that
 \begin{equation}   
  \label{local solution}
z_t(z_0)=z_t^n(z_0):=\big(x_t^n(x_0),v_t^n(v_0)\big)^T,\;\;\forall t\in [0,\tau_{n,R}).
\end{equation}
Moreover, based on the lemma below, the cut-off system and the original system share the same asymptotic dynamics, except for rare events (i.e., events of small probability). Consequently, a random invariant manifold of the cut-off system serves as a locally random invariant manifold for the original system.

\begin{remark}
The cut-off function $\chi$ in \eqref{eq:model-cut} is usually not unique. Consequently, the random invariant manifold we construct will not be unique either, even if it becomes unique once the cut-off equations are specified. Moreover, while the existence of a cut-off function is a sufficient condition for working with the cut-off equations, it is by no means necessary. In many cases, one can leverage the structure of the nonlinearity to construct the cut-off equations; see \cite{Cheng2019,KanDuan2013}.
\end{remark}

\begin{lemma} \label{lem1}
Consider the SDE \eqref{Cauchy-SDE} with $z(0)=z_0$. Then, for $R>0$, 
 \begin{equation}   
 \label{lem-PP}
\lim_{n\to\infty}\PP\left[\sup_{|z_0|\leqslant R} |z_t(z_0)| \geqslant n \right]=0.
  \end{equation}
\end{lemma}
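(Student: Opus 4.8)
The plan is to obtain the estimate \eqref{lem-PP} from a standard moment bound for the solution of the SDE \eqref{Cauchy-SDE} combined with Markov's (or Doob's) inequality. First I would fix $R>0$ and consider the $\RR^{2d}$-valued process $z_t(z_0)$ started at $z_0$ with $|z_0|\leqslant R$. The decisive input is the a priori moment estimate
\begin{equation}
\label{eq:moment-bound}
\mathbb{E}\Big[\sup_{|z_0|\leqslant R}\,\sup_{0\leqslant s\leqslant t}|z_s(z_0)|^p\Big]\leqslant C(R,t,p)<\infty,
\end{equation}
valid for some $p\geqslant 1$, which follows from the well-posedness results for \eqref{eq:model}/\eqref{Cauchy-SDE} recorded earlier: the drift consists of the linear hyperbolic part $Az$ plus $B(z,t)$, where $E(t)$ is bounded and $U$ is a bounded-below polynomial with polynomial-growth gradient, so the one-sided dissipativity/coercivity condition used in \cite{Mattingly2002,SongXie2020} yields finite moments uniformly on compact time intervals and uniformly over $z_0$ in the compact set $\{|z_0|\leqslant R\}$. (If one prefers, this is exactly the reason the cut-off system is introduced: on the cut-off system \eqref{eq:model-cut} the coefficients are globally Lipschitz, so \eqref{eq:moment-bound} is classical, and \eqref{local solution} shows the two processes agree up to $\tau_{n,R}$.)

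Next, from \eqref{eq:moment-bound} I would apply Markov's inequality to the random variable $\sup_{|z_0|\leqslant R}\sup_{0\leqslant s\leqslant t}|z_s(z_0)|$:
\begin{equation}
\label{eq:markov-step}
\PP\Big[\sup_{|z_0|\leqslant R}|z_t(z_0)|\geqslant n\Big]\leqslant \PP\Big[\sup_{|z_0|\leqslant R}\sup_{0\leqslant s\leqslant t}|z_s(z_0)|\geqslant n\Big]\leqslant \frac{C(R,t,p)}{n^{p}}\xrightarrow[n\to\infty]{}0,
\end{equation}
which is precisely \eqref{lem-PP}. The measurability of the supremum over $z_0$ in the compact set $\{|z_0|\leqslant R\}$ is handled by the continuity of $z_0\mapsto z_t(z_0)$ (restricting to a countable dense subset, exactly as in the measurability argument used in the proof of Theorem \ref{Existence-Persistence}), so $\sup_{|z_0|\leqslant R}|z_t(z_0)|$ is a genuine random variable and \eqref{eq:markov-step} is meaningful.

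\textbf{Main obstacle.} The only nontrivial point is establishing the uniform-in-$z_0$ finite moment bound \eqref{eq:moment-bound} \emph{for the original (non-cut-off) system}, since the nonlinearity $B(z,t)$ is only locally Lipschitz and grows polynomially. The remedy is the dissipative structure: writing out $d|z_t|^2$ by It\^o's formula, the contribution of $-\nabla U(x-E(t))$ against $v$ telescopes with the $\dot x = v$ coupling, and the double-well term $-\delta(x-E(t))^3$ with $\delta<0$ contributes with a favorable sign for large $|x|$; together with the dissipation $-\gamma v$ one gets $d\,\mathbb{E}|z_t|^{2}\leqslant (c_1+c_2\mathbb{E}|z_t|^2)\,dt$ plus a martingale term, and Gronwall closes the estimate, with higher moments handled analogously. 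Alternatively, and more cheaply, one invokes the quoted well-posedness/moment results of \cite{Mattingly2002,SongXie2020,Mao2007} directly and passes to the uniform bound over the compact initial set by a covering/continuity argument. Either way the estimate is soft; no delicate computation is needed, which is why the lemma is stated without a detailed proof.
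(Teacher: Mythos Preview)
Your overall strategy---a uniform moment bound plus Markov/Chebyshev---is exactly the route the paper takes, so at that level the proposal is fine. The paper's proof is not omitted, however: it constructs an explicit Lyapunov function
\[
\mathcal{H}(z,t)=\tfrac{1}{2}|v|^2+U(x-E(t))+\tfrac{\gamma}{2}\langle x,v\rangle+\tfrac{\gamma^2}{4}|x|^2+C,
\]
shows $\mathscr{A}\mathcal{H}\leqslant -C_0\mathcal{H}+C_1$, applies Dynkin and Gronwall to bound $\mathbb{E}\mathcal{H}(z,t\wedge\tau_{n,R})$, and then Chebyshev against $\inf_{|z|>n}\mathcal{H}\to\infty$. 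This is precisely the modified-energy construction of \cite{Mattingly2002} adapted to include the moving center $E(t)$.

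The genuine gap in your proposal is the It\^o sketch for $d|z_t|^2$. For the Langevin structure $\dot x=v$, $\dot v=-\gamma v-\nabla U(x-E)+\varepsilon\dot W$, one gets
\[
d|z|^2=\bigl[2\langle x,v\rangle-2\gamma|v|^2-2\langle v,\nabla U(x-E)\rangle+\varepsilon^2 d\bigr]\,dt+\text{martingale},
\]
and the term $-2\langle v,\nabla U(x-E)\rangle$ has \emph{no sign}: it is a pairing of $v$ with a cubic in $x$, so the claim that ``$-\delta(x-E)^3$ with $\delta<0$ contributes with a favorable sign for large $|x|$'' is false in this computation---the sign depends on the alignment of $v$ with $x-E$, not on $|x|$ alone. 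Nothing ``telescopes'' here unless you include $U(x-E)$ itself in the test function, which is exactly why the paper (and \cite{Mattingly2002}) uses $\mathcal{H}$ rather than $|z|^2$; the cross term $\tfrac{\gamma}{2}\langle x,v\rangle$ is then needed to extract dissipation in the $x$-direction. Your fallback of invoking \cite{Mattingly2002,SongXie2020} directly is legitimate, but be aware that the actual content of the lemma \emph{is} that Lyapunov construction, and the naive quadratic does not close.
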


\begin{proof}
The main idea of this proof is to construct a Lyapunov function. This technique has been used in the case $E(t)\equiv 0$ (see Section 3 in \cite{Mattingly2002}) and for other models (see Section 3.4 in \cite{Khasminskii2012}).

Note that $E(t)$ is bounded and $U(x)$ is  bounded below. In particular, given \eqref{eq:1} and \eqref{eq:2}, we have
$$
|E(t)|\leqslant m \text{ and } U(x)\geqslant \frac{1}{4\delta} \;\;(\delta<0).
$$
Motivated by the case of $E(t)\equiv 0$ \cite{Mattingly2002}, we define a Lyapunov function
\begin{equation}
\label{Lyapunov function}
\mathcal{H}(z,t)=\frac{1}{2}|v|^2+U(x-E(t))+\frac{1}{2}\gamma\langle x,v\rangle +\frac{\gamma^2}{4}|x|^2+C,
\end{equation}
where $C$ is a positive constant satisfying $C>-\inf_x U(x)$. It is clear that
\begin{equation}
\label{Lyapunov-1}
\mathcal{H}(z,t)>\frac{1}{8}|v|^2+\frac{\gamma^2}{12}|x|^2\geqslant 0, \;\;\forall t>0, \notag
\end{equation}
and
\begin{equation}
\label{Lyapunov-2}
\inf_{t>0,|z|>n}\mathcal{H}(z,t) \to \infty,\;\;\text{as}\;\;n\to\infty.
\end{equation}
One the other hand, we claim that
\begin{equation}
\label{Lyapunov-3}
\mathscr{A}\mathcal{H}(z,t)\leqslant - C_0\mathcal{H}(z,t) +C_1,
\end{equation}
where $C_0,C_1>0$ are positive constants and $\mathscr{A}$ is the generator of the SDE \eqref{Cauchy-SDE} given by
\begin{equation}
\label{Generator-SDE}
\mathscr{A}= Az\cdot (\nabla_z, \partial_t) - \nabla U (x-E(t))\cdot (\nabla_v,\partial_t)+\frac{1}{2} \varepsilon^2\Delta_v.
\end{equation}
In fact, this follows from a modification of Lemma 3 in \cite{Mattingly2002} by taking the bounded function $E(t)$ into account. As the proof for this claim is standard, we omit the details here.

By \eqref{Lyapunov-3}, Dynkin's formula and Gronwall’s inequality, we thus have
\begin{align}
\label{Dynkin}
\mathbb{E}\mathcal{H}(z,t\wedge \tau_{n,R})
=&\mathbb{E}\mathcal{H}(z_0,0)+\mathbb{E}\int_0^{t\wedge \tau_{n,R}}\mathscr{A}\mathcal{H}(z,s) ds \notag\\
\leqslant&e^{-C_0(t\wedge \tau_{n,R})}\mathbb{E}\mathcal{H}(z_0,0)+\frac{C_1}{C_0}\left[1-e^{-C_0 ( t\wedge \tau_{n,R})}\right].
\end{align}
Applying the Chebyshev’s inequality, we derive the estimate
\begin{align}
\label{Chebyshev}
\PP\left[\sup_{|z_0|\leqslant R} |z_t(z_0)| \geqslant n \right]\leqslant\frac{e^{-C_0 t}\mathbb{E}\mathcal{H}(z_0,0)+\frac{C_1}{C_0}\left[1-e^{-C_0 t}\right]}{\inf_{t>0,|z|>n}\mathcal{H}(z,t)}.
\end{align}
The result follows by letting $n\to\infty$ and making use of \eqref{Lyapunov-2}.
\end{proof}
\begin{remark}
An approximation SDE in the form of \eqref{eq:model-cut} can be also proposed to prove the the existence and uniqueness of strong solutions. In fact, once we have the local solution $z_t=\{z_t\}_{t\in[0,\tau_{n,R})}$ in \eqref{local solution}, we only need to show that $\lim_{n\to\infty} \tau_{n,R}=\infty$, a.e. Lemma \ref{lem1} ensures that $ \tau_{n,R}\to\infty$ in probability, and it means that there exists a subsequent $\{\tau_{n_k,R}\}$ such that $\tau_{n_k,R}\to\infty$ a.e. Based on the fact that $\tau_{n,R}$ is increasing with respect to $n$, we can thus conclude that the local solution \eqref{local solution} is a global one.
\end{remark}


\end{document}